\documentclass[hidelinks,onefignum,onetabnum]{siamart220329}

\usepackage{amsmath} 
\usepackage{amssymb} 
\usepackage{amsfonts}
\usepackage[utf8]{inputenc}
\usepackage{graphicx}
\usepackage{float} 
\usepackage{subcaption} 
\usepackage{epstopdf}
\usepackage{multirow}
\usepackage{algorithmic}
\usepackage{tabularx} 
\usepackage{longtable} 
\ifpdf
  \DeclareGraphicsExtensions{.eps,.pdf,.png,.jpg}
\else
  \DeclareGraphicsExtensions{.eps}
\fi

\newsiamremark{remark}{Remark}
\newsiamremark{hypothesis}{Hypothesis}
\crefname{hypothesis}{Hypothesis}{Hypotheses}
\newsiamthm{claim}{Claim}

\headers{A refined joint bidiagonalization method}{K. FANG and Z. JIA}

\title{The Refined Joint Bidiagonalization Method and an Implicitly Restarted Algorithm for Large GSVD Computations\thanks{Submitted to the editors DATE.
\funding{This work was supported in part by the National Science Foundation of China (No. 12571404).}}}

\author{Kaixiao Fang\thanks{Department of Mathematical Sciences, Tsinghua University, 100084 Beijing, People's Republic of China.
  (\email{13021088186@163.com}).}
\and Zhongxiao Jia\thanks{Correspondence. Department of Mathematical Sciences, Tsinghua University, 100084 Beijing, People's Republic of China.
  (\email{jiazx@tsinghua.edu.cn}).}}

\usepackage{amsopn}

\begin{document}
\sloppy 

\maketitle

\begin{abstract}
    We make a convergence analysis on the joint bidiagonalization (JBD) method that
    computes several extreme generalized singular value decomposition (GSVD)
    components of a regular matrix pair $\{A,L\}$, and show that the right and
    left Ritz vectors obtained by
    it may converge erratically and even may fail to converge, while Ritz values converge.
    These convergence results
    hold for a class of general Rayleigh--Ritz projection methods for the GSVD problem
    under the hypothesis that the deviation of a
    desired right generalized singular vector from the right subspace tends to zero. We prove
    the interlacing property of Ritz values and generalized singular values, and
    extend it to the generalized singular values of $\{A,L\}$ and the matrix pairs consisting of subsets
    of its columns. To overcome the
    irregular convergence or possible non-convergence of the JBD method,
    we nontrivially extend the refined Rayleigh--Ritz projection for the eigenvalue problem to the GSVD
    problem, and propose a refined JBD (RJBD) method that replaces
    the right Ritz vectors by new approximations, called the right refined
    Ritz vectors, satisfying certain residual optimality;
    we define new approximate left generalized singular vectors, called the left refined Ritz vectors. We
    prove that the left and right refined Ritz vectors unconditionally converge under the same hypothesis.
    We extend the implicit restarting scheme to the RJBD method, and develop an implicitly restarted RJBD
 algorithm with the refined shifts proposed. Numerical
    experiments illustrate that the new algorithm is at least competitive and often considerably
    more efficient than the implicitly restarted JBD algorithm.
\end{abstract}

\begin{keywords}
generalized singular value, generalized singular vector, the JBD method, the RJBD method, Ritz vector, refined Ritz vector, implicit restarting, refined shifts
\end{keywords}

\begin{MSCcodes}
65F15, 15A18, 15A12, 65F10, 65F50
\end{MSCcodes}

\section{Introduction}
Let $A \in \mathbb{R}^{m \times n}$ and $L \in \mathbb{R}^{p \times n}$, and suppose the matrix pair $\{A, L\}$ is regular, i.e., $\operatorname{rank}((A^{\mathrm T},\,L^{\mathrm T})^{\mathrm T})=n$, where the superscript T denotes the transpose of a matrix or vector. Denote $q_1=\operatorname{dim}(\mathcal{N}(A))$, $q_2=\operatorname{dim}(\mathcal{N}(L))$, $q=n-q_1-q_2$, $l_1=\operatorname{dim}(\mathcal{N}(A^{\mathrm T}))$,  and $l_2=\operatorname{dim}(\mathcal{N}(L^{\mathrm T}))$, where $\mathcal{N}(\cdot)$ is the null space of a matrix. Then according to \cite{PaigeandSaunders1981}, the GSVD of $\{A, L\}$ is
\begin{equation}
\label{GSVD in matrix terms}
\left\{\begin{array}{l}
P_A^{\mathrm T} A X=C_A=\operatorname{diag}\left\{C, 0_{l_1, q_1}, I_{q_2}\right\}, \\
P_L^{\mathrm T} L X=S_L=\operatorname{diag}\left\{S, I_{q_1}, 0_{l_2, q_2}\right\},
\end{array}\right.
\end{equation}
where $P_A$ and $P_L$ are orthogonal, $X$ is nonsingular, $C_A=\operatorname{diag}(c_1, c_2, \ldots, c_q)$ and
$S_L=\operatorname{diag}(s_1, s_2, \ldots, s_q)$ with $0<c_i, s_i<1$, and $c_i^2+s_i^2=1$, and the subscripts
in $0$ and $I$ represent the column and row sizes of zero and identity matrices; see
\cite{GolubandVan2013matrix} for more details. Throughout the paper, we label the $c_i$ in
descending order.

Partition $P_A=(P_{A,q},P_{A,l_1},P_{A,q_2})$, $P_L=(P_{L,q},P_{L,q_1},P_{L,l_2})$, and
$X=(X_q,X_{q_1},X_{q_2})$,
where the lowercase letters in subscripts represent the column numbers of matrices, and write
$X_q=(x_1,\ldots,x_q)$, $P_{A,q}=(p_1^A, \ldots, p_q^A)$, and $P_{L,q}=(p_1^L, \ldots, p_q^L)$. We call the
quintuple $\{c_i,\,s_i,\,x_i,\,p_i^A,\,p_i^L\}$ a nontrival GSVD component of $\{A,L\}$ with the
generalized singular value $c_i/s_i$ or $\{c_i,s_i\}$,
the left generalized singular vectors $p_i^A$, $p_i^L$ and the right
generalized singular vector $x_i$. The GSVD components satisfy the relationships
\begin{equation}
\label{GSVD in vector terms}
    \left\{\begin{array}{rl}
    A x_i & =c_i p_i^A, \\
    L x_i & =s_i p_i^L, \\
    s_i A^{\mathrm{T}} p_i^A & =c_i L^{\mathrm T} p_i^L,
    \end{array} \quad i=1,\,2,\dots, q.\right.
\end{equation}
$(0_{l_1,q_1},I_{q_1},P_{A,l_1},P_{L,q_1},X_{q_1})$ and $(I_{q_2},0_{l_2,q_2},P_{A,q_2},P_{L,l_2},X_{q_2})$ are called the trivial GSVD parts,
which correspond to the $q_1$ zero and $q_2$ infinite generalized singular values, respectively. A trivial
GSVD component satisfies the above relationships too by defining the pair $\{c,s\}=\{0,1\}$ or
$\{1,0\}$.

In this paper we aim to compute several GSVD components corresponding to the nontrivial extreme, i.e.,
largest or smallest generalized singular values. Such kind of problem arises from numerous applications,
such as solutions of linear discrete ill-posed problems with general-form
regularization
\cite{Rank-Deficient-and-Discrete-Ill-Posed-Problems}, principal component analysis in statistics \cite{PCA},
pattern recognition \cite{model-identification}, and signal processing \cite{Signal-Analysis}. The JBD method proposed by Zha \cite{Zha-JBD} can be used to solve this kind of problem, in which
the JBD process jointly reduces $\{A,\,L\}$ to upper bidiagonal forms.
Kilmer, Hansen and Español \cite{Kilmer-JBD} adapt the
JBD process to jointly bidiagonalizing $\{A,\,L\}$ to lower and upper bidiagonal forms, and exploit
it to solve linear discrete ill-posed problems with general-form Tikhonov regularization.
Jia and Yang \cite{JIAandYang2020JBDforTikhonov}
use it to solve large-scale linear discrete ill-posed problems with the general-form
regularization
and propose an equally effective but more efficient purely iterative solver without explicit Tikhonov regularization. The framework of general Rayleigh--Ritz projection methods is proposed in \cite{JiaandZhang2023FEASTGSVD}
for the large GSVD computation, which includes the JBD method, the Jacobi--Davidson (JD) type GSVD
method \cite{Huang-Numerical-experiments} and the CJ-FEAST GSVDsolver \cite{JiaandZhang2023FEASTGSVD}.

In this paper, we will make three main contributions to the GSVD problem and its numerical
solution. {\em As the first contribution}, we prove that the approximate left and right
generalized singular vectors, called the left and right Ritz
vectors, obtained by the JBD method may converge erratically and even may fail to converge
under the necessary hypothesis that the deviations of the desired right generalized singular vectors from
the right projection subspaces converge to zero.
These convergence results hold for a subclass of general Rayleigh--Ritz projection methods
\cite{JiaandZhang2023FEASTGSVD}, which will be clear later, under the hypothesis that
the deviations of
the desired generalized right singular vectors from right subspaces tend to zero.
Importantly and interestingly, we establish the interlacing property of
the generalized singular values and Ritz values, which becomes the strict one when the JBD method is used. The
interlacing property extends to the generalized singular values of the matrix pair $\{A,L\}$ and submatrix
pairs consisting of its arbitrary columns.

The refined Rayleigh--Ritz projection method initially proposed by Jia \cite{Jia-refined}
has been widely used \cite{book:BaiEigenvalue,book:StewartMatrixAlgorithms,book:VanDerVorst2002}, and has
been adapted to the large SVD computations in, e.g.,
\cite{Hochstenbach2004BIT,HuangandJia2019JD,Jia-IRRBL,implicit-restart-bidiag}.
The method computes new approximate eigenvectors or singular vectors,
called the refined Ritz vectors, which are guaranteed to unconditionally converge
and overcome the irregular convergence and possible non-convergence of Ritz vectors
under the natural hypothesis that the deviations of desired eigenvectors or singular vectors
from projection subspaces approach zero.
{\em As the second contribution of this paper}, we nontrivially extend the refined Rayleigh--Ritz projection to
the large GSVD computation, and propose a
refined JBD (RJBD) method that replaces the right Ritz vectors by new approximate right singular vectors,
called the right refined Ritz vectors, that satisfy certain residual optimality; exploiting them,
we introduce new approximate left generalized singular vectors, called left refined
Ritz vectors. We prove that the left and right refined Ritz vectors and the Ritz values are guaranteed to
converge when the mentioned deviations tend to zero.

The authors \cite{IRJBD} adapt the implicit restart technique proposed by Sorensen
\cite{implicit-restarted-Arnoldi} to the JBD process, and have developed an implicitly restarted JBD
(IRJBD) algorithm with several subtle issues addressed in finite precision arithmetic.
Numerical experiments in \cite{IRJBD} have demonstrated that IRJBD is at
least competitive with the thick-restart JBD algorithm \cite{TRJBD} in terms of restarts
and robustness. It turns out that one of the keys for the success and overall efficiency
of IRJBD  is proper selection of shifts involved.
The exact shifts have been proposed for IRJBD in \cite{IRJBD}, which are those
unwanted Ritz values and have been shown to be the best possible ones within
the framework of the JBD method. {\em As the third contribution of this paper},
we shall apply the implicit restarting scheme to the RJBD method,
and develop an implicitly
restarted RJBD algorithm, called IRRJBD. For the algorithm to be
efficient as much as possible, we analyze the exact shifts and get
more insight into them; motivated by it, we exploit the left and right refined Ritz vectors to present new
shifts, called the refined shifts, and
show that they can generate more accurate restarted subspaces than the exact shifts
do. Numerical
experiments will illustrate that IRRJBD is at least as fast as and is, in two-thirds of
the cases, $30\%\sim 50\%$ more efficient than IRJBD.

In \cref{sec:2}, we review the JBD method and IRJBD algorithm.
In \cref{sec:3}, we analyze the convergence of the Rayleigh--Ritz method including the JBD method, and show
why the Ritz vectors may converge erratically and even may
fail to converge. In \cref{sec:4}, we propose the RJBD method, and establish
the unconditional convergence of refined Ritz vectors.
\Cref{sec:5} is devoted to the refined shifts,
and we prove that they are better than the exact
shifts. \Cref{sec:6} summarizes IRRJBD with its default parameters.
In \cref{sec:7} we report numerical experiments to illustrate the performance of IRRJBD and its
superiority to IRJBD. \Cref{sec:8} concludes the paper with future work.

Throughout the paper, we denote by $\|\cdot\|$ the 2-norm of a matrix, by
$\kappa(C)=\sigma_{\max}(C)/\sigma_{\min}(C)$ the 2-norm
condition number of a matrix $C$ with $\sigma_{\max}(C)$ and $\sigma_{\min}(C)$
being the largest and smallest singular values of $C$, respectively, and $I_k$ by the $k\times k$ identity
matrix with the subscript $k$ dropped whenever it is clear.

\section{A review of the JBD method and IRJBD algorithm}\label{sec:2}
Let the thin QR factorization of $\left\{A, L\right\}$ be
\begin{equation}
\label{QR}
    \left(\begin{array}{l}
    A \\
    L
    \end{array}\right)=Q R=\left(\begin{array}{l}
    Q_A \\
    Q_L
\end{array}\right) R
\end{equation}
with $Q_A \in \mathbb{R}^{m \times n}$ and $Q_L \in \mathbb{R}^{p \times n}$. Then
GSVD (\ref{GSVD in matrix terms}) is intimately related to the CS decomposition (CSD) (cf.
\cite{GolubandVan2013matrix,book:StewartMatrixAlgorithms}) of the matrix pair $\{Q_A,\,Q_L\}$ by
\begin{equation}
\label{CS of QA and QL}
Q_A=P_AC_AW^{\mathrm T},\quad Q_L=P_LS_LW^{\mathrm T}
\end{equation}
with $W$ being orthogonal,
and
\begin{equation}
\label{X=R-1W}
X=R^{-1}W \ \mbox{or}\ W=RX.
\end{equation}

Given starting vectors $u_1$ and $\hat v_1$, the $k$-step lower and upper Lanczos bidiagonalization processes \cite{Bjorck1996LeastSquares,LSQR}
of $Q_A$ and $Q_L$ are
\begin{equation}
\label{Lanczos bidiagonalization}
    \begin{aligned}
    & Q_A V_k=U_{k+1} B_k, & & Q_A^{\mathrm T} U_{k+1}=V_k B_k^{\mathrm T}+\alpha_{k+1} v_{k+1} e_{k+1}^{\mathrm T}, \\
    & Q_L \widehat{V}_k=\widehat{U}_k \widehat{B}_k, & & Q_L^{\mathrm T} \widehat{U}_k=\widehat{V}_k \widehat{B}_k^{\mathrm T}+\hat{\beta}_k \hat{v}_{k+1} e_k^{\mathrm T},
    \end{aligned}
\end{equation}
respectively, where
$$
\resizebox{\textwidth}{!}{$
B_k=\left(\begin{array}{cccc}
\alpha_1 & & & \\
\beta_2 & \alpha_2 & & \\
& \beta_3 & \ddots & \\
& & \ddots & \alpha_k \\
& & & \beta_{k+1}
\end{array}\right) \in \mathbb{R}^{(k+1) \times k}, \quad\widehat{B}_k=\left(\begin{array}{cccc}
\hat{\alpha}_1 & \hat{\beta}_1 & & \\
& \hat{\alpha}_2 & \ddots & \\
& & \ddots & \hat{\beta}_{k-1} \\
& & & \hat{\alpha}_k
\end{array}\right) \in \mathbb{R}^{k \times k},
$}
$$
$$
U_{k+1}=\left(u_1, \ldots, u_{k+1}\right) \in \mathbb{R}^{m \times(k+1)}, \quad V_k=\left(v_1, \ldots, v_k\right) \in \mathbb{R}^{n \times k},
$$
$$
\widehat{U}_k=\left(\hat{u}_1, \ldots, \hat{u}_k\right) \in \mathbb{R}^{p \times k}, \quad \widehat{V}_k=\left(\hat{v}_1, \ldots, \hat{v}_k\right) \in \mathbb{R}^{n \times k}
$$
with $U_{k+1},\,V_k,\,\widehat{U}_k$ and $\widehat{V}_k$ being column orthonormal, and $v_1=\frac{Q_A^{\mathrm{T}}u_1}{\left\|Q_A^{\mathrm{T}}u_1\right\|}$.

One can connect the above two bidiagonalization processes
by choosing the starting vector $\hat{v}_1=v_1$ in the upper Lanczos bidiagonalization process. In this case,
it is proved in \cite{Kilmer-JBD,Zha-JBD} that
\begin{equation}
\label{relation of v and vhat}
\hat{v}_{i+1}=(-1)^i v_{i+1}, \quad \hat{\alpha}_i \hat{\beta}_i=\alpha_{i+1} \beta_{i+1}, \quad i=1,2,\dots,k-1.
\end{equation}
For $A$ and $L$ large, computing the QR factorization (\ref{QR}) is generally unaffordable. We suppose
that computing this factorization is prohibitive throughout this paper.
Without $Q_A$, $Q_L$ and $R$ at hand, the two joint Lanczos bidiagonalization processes in \eqref{Lanczos bidiagonalization} and
\eqref{relation of v and vhat} can be realized by the JBD process \cite{Jia-JBD-finite-precision,Kilmer-JBD}, as described in \Cref{alg:JBD}.

\begin{algorithm}[H]
	\renewcommand{\algorithmicrequire}{\textbf{Input:}}
	\renewcommand{\algorithmicensure}{\textbf{Output:}}
	\caption{The $k$-step JBD process of $\left\{A,L\right\}$}
	\label{alg:JBD}
	\begin{algorithmic}[1]
	    \REQUIRE $u_1 \in \mathbb{R}^m$ with $\|u_1\|=1$
        \STATE $\alpha_1 v_1^{\prime}=QQ^{\mathrm{T}}\left(\begin{array}{c}
        u_{1} \\
        0_p
        \end{array}\right)$ with the normalizing factor $\alpha_1$ such that $\|v_1^{\prime}\|=1$;
        \STATE $\hat{\alpha}_1 \hat{u}_1=v_1^{\prime}(m+1: m+p)$;
        \FOR{$i=1,2,...,k$}
		\STATE $\beta_{i+1} u_{i+1}=v_i^{\prime}(1: m)-\alpha_i u_i$;
            \label{JBD inner start}
        \STATE $\alpha_{i+1} v_{i+1}^{\prime}=QQ^{\mathrm{T}}\left(\begin{array}{c}
        u_{i+1} \\
        0_p
        \end{array}\right)-\beta_{i+1}v_i^{\prime}$ with $\|v_{i+1}^{\prime}\|=1$;
        \STATE $\hat{\beta}_i=\left(\alpha_{i+1} \beta_{i+1}\right)/\hat{\alpha}_i$;
        \STATE $\hat{\alpha}_{i+1} \hat{u}_{i+1}=(-1)^i v_{i+1}^{\prime}(m+1: m+p)-\hat{\beta}_i \hat{u}_i$.
        \label{JBD inner end}
	    \ENDFOR
	\end{algorithmic}
\end{algorithm}

Since $QQ^{\mathrm{T}}(u_i^{\mathrm{T}},\,0_p^{\mathrm{T}})^{\mathrm{T}}, i=1,2,\dots,k+1$
are the orthogonal projections of $(u_i^{\mathrm{T}},\,0_p^{\mathrm{T}})^{\mathrm{T}}$
onto the column space of $(A^{\mathrm{T}},\,L^{\mathrm{T}})^{\mathrm{T}}$, we have $QQ^{\mathrm{T}}(u_i^{\mathrm{T}},\,0_p^{\mathrm{T}})^{\mathrm{T}}=(A^{\mathrm{T}},\,L^{\mathrm{T}})^{\mathrm{T}}y_i$, where
\begin{equation}
\label{least-squares}
y_i=\arg \min \limits_{y \in \mathbb{R}^n}\left\|\left(\begin{array}{l}A \\ L\end{array}\right) y-\left(\begin{array}{l}u_i \\ 0_p\end{array}\right)\right\|,\quad i=1,\dots,k+1.
\end{equation}
Throughout the paper, these large least squares (LS) problems are supposed to
be iteratively solved by the most commonly used LSQR algorithm \cite{LSQR}, a Krylov
subspace method \cite{Bjorck1996LeastSquares}.
In such a way, we avoid the computation of the QR factorization \eqref{QR}.

Suppose that, at this moment, the LS problems in \Cref{alg:JBD} are solved accurately.
By exploiting \eqref{relation of v and vhat}, the JBD process can be written as
\begin{equation}
\label{JBD}
    \begin{aligned}
    & \left(I_m, 0_{m,p}\right) V_k^{\prime}=U_{k+1} B_k, \\
    & Q Q^{\mathrm T}\left(\begin{array}{c}
    U_{k+1} \\
    0_{p,k+1}
    \end{array}\right)=V_k^{\prime} B_k^{\mathrm T}+\alpha_{k+1} v_{k+1}^{\prime} e_{k+1}^{\mathrm T}, \\
    & \left(0_{p,m}, I_p\right) V_k^{\prime} D_k=\widehat{U}_k \widehat{B}_k,
    \end{aligned}
\end{equation}
where
\begin{equation}\label{vprime}
V_k^{\prime}=QV_k \ \ \mbox{and}\ \  D_k=\operatorname{diag}\left(1,-1, \ldots,(-1)^{k-1}\right) \in \mathbb{R}^{k \times k}.
\end{equation}
Let
\begin{equation}\label{bkbar}
H_k=R^{-1} V_k=\left(h_1, \ldots, h_k\right),\ \ \bar{B}_k=\widehat{B}_k D_k.
\end{equation}
Relations \eqref{JBD} and \eqref{bkbar} lead to
the basic relations of the JBD process:
\begin{equation}
\label{BTB+barBTbarB=I}
\begin{aligned}
&A H_k=U_{k+1} B_k,\quad LH_k=\widehat{U}_k \bar{B}_k,\\
&B_k^{\mathrm T} B_k+\bar{B}_k^{\mathrm T} \bar{B}_k=I_k,
\end{aligned}
\end{equation}
which generates the explicit
left Krylov subspaces $\operatorname{span}(U_{k+1})$ and $\operatorname{span}(\widehat U_k)$ and the
implicit right subspace $\operatorname{span}(H_k)=R^{-1}\operatorname{span}(V_k)$ with $R$ being
unavailable and  $\operatorname{span}(V_k)$ being a Krylov subspace, respectively: it is known from \eqref{Lanczos bidiagonalization} that
\begin{eqnarray*} &\operatorname{span}(V_k)=\operatorname{span}(\widehat{V}_k)=\mathcal{K}_k(Q_A^{\rm T}Q_A,v_1)=\mathcal{K}_k(Q_L^{\rm T}Q_L,v_1),  \\
&{\rm span}(U_{k+1})=\mathcal{K}_{k+1}(Q_AQ_A^{\rm T},u_1),\
{\rm span}(\widehat{U}_k)=\mathcal{K}_k(Q_LQ_L^{\rm T},\hat{u}_1),
\end{eqnarray*}
where $\mathcal{K}_k(C,w)$ is the $k$-dimensional Krylov subspace generated by the matrix $C$
and the starting vector $w$.

With the above left and right subspaces, the JBD method falls into the category of
the framework of general Rayleigh--Ritz projection methods \cite{JiaandZhang2023FEASTGSVD}
for the GSVD problem; it obtains the projection matrix pair $\{B_k,\,\bar B_k\}$,
and computes the approximate GSVD components, i.e., Ritz approximations,
as follows: Let the SVDs of $B_k$ and $\bar B_k$, or equivalently, the CSD
of $\{B_k,\,\bar B_k\}$ be
\begin{equation}
\label{GSVD of Bk and Bkbar}
\begin{aligned}
    B_k=\widetilde Q_k^A \widetilde C_k \widetilde W_k^{\mathrm T}, \quad \widetilde C_k=\text{diag}\left(\tilde c_1, \dots, \tilde c_k\right), \quad 1> \tilde c_1>\cdots> \tilde c_k> 0,\\
    \bar{B}_k=\widetilde Q_k^L \widetilde S_k \widetilde W_k^{\mathrm T}, \quad \widetilde S_k=\text{diag}\left(\tilde s_1, \dots, \tilde s_k\right), \quad 0< \tilde s_1<\cdots<\tilde s_k< 1,
\end{aligned}
\end{equation}
where $\widetilde Q_k^A$ are orthonormal, $\widetilde Q_k^L$ and $\widetilde W_k$ are orthogonal, and $\tilde
c_i^2+\tilde s_i^2=1$. Then the Ritz approximations are
$\{\tilde c_i,\tilde s_i,\tilde x_i,\tilde p_i^A,\tilde p_i^L\}$, where
\begin{equation}
\label{approximate generalized singular vectors from JBD}
\tilde x_i=R^{-1}V_k\tilde w_i,\quad\tilde p_i^A=U_{k+1}\tilde q_i^A, \text{and}\quad \tilde p_i^L=\widehat U_k\tilde q_i^L,
\end{equation}
with $\tilde w_i$, $\tilde q_i^A$ and $\tilde q_i^L$ being the $i$-th columns of $\widetilde W_l$, $\widetilde Q_l^A$ and $\widetilde Q_l^L$, respectively.
We call the $\tilde c_i/\tilde s_i$ Ritz values, $\tilde
x_i$ the right Ritz vectors, and $\tilde p_i^A$ and $\tilde p_i^L$ the left Ritz vectors
for $A$ and $L$, respectively, where, by exploiting $V^\prime=QV_k$ in \eqref{vprime},
$\tilde x_i$ is computed by solving the consistent linear system
\begin{equation}
\label{calculate x_i by lsqr}
\left(\begin{array}{c}
A \\
L
\end{array}\right) \tilde x_i=Q R R^{-1} V_k \tilde w_i=V^{\prime}_k \tilde w_i
\end{equation}
using the LSQR algorithm.  From the first two relations in
\eqref{BTB+barBTbarB=I}, we have $A \tilde x_i=\tilde c_i \tilde p_i^A$ and
$L \tilde x_i=\tilde s_i \tilde p_i^L$. The JBD
method then uses the $l$ largest or smallest Ritz approximations to approximate the $l$ largest
or smallest GSVD components of $\{A,L\}$.

It has been proved in \cite{Jia2020RegularizationofLSQR} that, provided that the Lanczos
bidiagonalization process does not break down, the Ritz values obtained by the Lanczos
bidiagonalization method are always simple,
and the method works as if the underlying matrix had only {\em simple nonzero}
singular values and {\em multiple} zero singular values;
the Ritz values approach {\em distinct} nonzero singular values of the original
matrix and the Ritz vectors approach the
{\em specific} singular vectors in the corresponding singular subspaces of the
original matrix. Suppose that the JBD process does not
break down before step $k$. According to the equivalence of the JBD process of $\{A,L\}$ and
the Lanczos bidiagonalization processes of $\{Q_A,Q_L\}$, the JBD method works as if the
finite {\em nonzero} generalized singular
values of $\{Q_A,Q_L\}$ were simple; if the matrix pair has zero and infinite generalized
singular values, then the JBD method cannot compute them. As a result, the Ritz values $\{\tilde
c_i,\tilde s_i\}$ are always simple and approach {\em distinct} nontrivial
generalized singular values of $\{A,L\}$, and
the right and left Ritz vectors $\tilde x_i$, $\tilde p^A_i$ and $\tilde p^L_i$ approach the
corresponding specific right and left generalized
singular vectors in the corresponding right and left generalized singular subspaces. 
Therefore, in the sequel, for the JBD method,
we suppose that the generalized singular values of $\{A,L\}$ are
simple. From (\ref{GSVD in matrix terms}) and (\ref{GSVD in vector terms})
we label them as
\begin{equation}  \label{labelrule}
1=c_0>c_{1}>c_1>\cdots>c_q>c_{q+1}=0,
\end{equation}
where $c_0$ and $c_{q+1}$ may be nonexistent.

In terms of (\ref{GSVD in vector terms}), the GSVD residual of a general approximate GSVD component,
e.g., the Ritz approximation $\{\tilde c_i,\tilde s_i,\tilde x_i,\tilde p_i^A,\tilde p_i^L\}$, is defined by
\begin{equation}
\label{residual}
r\left(\tilde c, \tilde s, \tilde x, \tilde p^A, \tilde p^L\right)=\left(\begin{array}{c}
A \tilde x_i-\tilde c_i \tilde p_i^A \\
L \tilde x_i-\tilde s_i \tilde p_i^L \\
\tilde s_i A^{\mathrm{T}} \tilde p_i^A-\tilde c_i L^{\mathrm{T}} \tilde p_i^L
\end{array}\right).
\end{equation}

\section{Convergence analysis on the JBD method and the Rayleigh--Ritz projection method}\label{sec:3}

A subclass of general Rayleigh--Ritz projection methods proposed
\cite{JiaandZhang2023FEASTGSVD} for the GSVD problem
is as follows: Write a {\em general} right subspace ${\rm span}(H_k)=R^{-1}{\rm span}(V_k)$
and two {\em general} left subspaces ${\rm span}(U_{k+1})$ and ${\rm span}(\widehat{U}_k)$
for $A$ and $L$, and suppose that the approximate
GSVD components satisfy $A \tilde x_i=\tilde c_i \tilde p_i^A$ and
$L \tilde x_i=\tilde s_i \tilde p_i^L$, which the JBD method, the JD GSVD
method \cite{Huang-Numerical-experiments} and the CJ-FEAST GSVDsolver \cite{JiaandZhang2023FEASTGSVD}
satisfy. Then
the projection matrix pair is $\{U_{k+1}^{\rm T}A H_k,\widehat{U}_k^{\rm T} LH_k\}$, and it is equal to
$\{B_k,\bar{B}_k\}$ in the JBD method.
We need to mention that a general
Rayleigh--Ritz projection method in \cite{JiaandZhang2023FEASTGSVD}
requires that the dimensions of two left subspaces are no smaller
than that of the right subspace \cite{JiaandZhang2023FEASTGSVD} but does not require
that $A \tilde x_i=\tilde c_i \tilde p_i^A$ and
$L \tilde x_i=\tilde s_i \tilde p_i^L$.

In this section, we analyze the convergence of a general Rayleigh--Ritz projection method
\cite{JiaandZhang2023FEASTGSVD} that satisfies
$A \tilde x_i=\tilde c_i \tilde p_i^A$ and
$L \tilde x_i=\tilde s_i \tilde p_i^L$, and particularly focus on
the JBD method. We drop the
subscripts of GSVD components and approximate GSVD components for brevity, and
let $\{c,s,x,p^A,p^L\}$ be a
simple GSVD component of $\{A,L\}$ and $\{\tilde c,\tilde s,\tilde x,\tilde p^A,\tilde p^L\}$ be its
Ritz approximation. From \eqref{CS of QA and QL} and \eqref{X=R-1W}
we know that $Rx$ is
the corresponding right singular vector of $Q_A$ and $Q_L$
associated with the singular values $c$ and $s$, respectively.
Relation \eqref{approximate generalized singular vectors from JBD}
shows that $R\tilde x$ is a unit-length approximation of $Rx$ extracted from a general right subspace
$\operatorname{span}(V_k)$ for the SVDs of $Q_A$ and $Q_L$.

We measure the deviation of $Rx$ from $\operatorname{span}(V_k)$ by
\begin{equation}\label{epsilonk}
\epsilon=\sin\angle\left(Rx,V_k\right),
\end{equation}
the sine of the acute angle between $Rx$ and $\operatorname{span}(V_k)$.
Notice from \eqref{QR} that $R^{\rm T}R=A^{\rm T}A+L^{\rm T}L=: M$ is symmetric
positive definite.
It is seen from \eqref{bkbar} that
$$
\epsilon=\sin\angle (x,H_k)_M,
$$
which is the deviation of $x$ from the right subspace ${\rm span}(H_k)$ in the $M$-inner product.
Particularly, $\epsilon=0$
means that $Rx\in\operatorname{span}(V_k)$, i.e., a perfect right subspace. The hypothesis $\epsilon\rightarrow0$ is necessary for the convergence of any projection method because, for
 {\em any} approximation $\tilde x \in R^{-1}\operatorname{span}(V_k)$, we have
$$
\sin\angle(\tilde x,x)_M=\sin\angle(R\tilde x,Rx)\geqslant\epsilon,
$$
which means that if $\epsilon\not\rightarrow0$ then
$\tilde x$ definitely does not converge to $x$.

How to construct a sequence of $\operatorname{span}(V_\epsilon):=\operatorname{span}(V_k)$ with fixed or
varying dimension such that
$\epsilon\rightarrow 0$ is another important issue that is not concerned in this paper.
For instance, in the CJ-FEAST GSVDsolver \cite{JiaandZhang2023FEASTGSVD},
a sequence of right subspaces ${\rm span}(H_\epsilon)=R^{-1}{\rm span}(V_\epsilon)$
with fixed dimension is iteratively constructed, and those $\epsilon$ for the
concerning right generalized singular vectors $x$ have been proved to tend to
zero under certain mild conditions as the iterations proceed. In what follows,
whenever necessary, one can replace
the subscripts $k$ and $k+1$ by $\epsilon$ so as to indicate the dependence of subspaces
and projection matrices on $\epsilon$.

Since the proofs and analysis of the following convergence results only require that $\epsilon\rightarrow 0$, they hold for the subclass of general Rayleigh--Ritz projection methods
\cite{JiaandZhang2023FEASTGSVD} satisfying $A \tilde x=\tilde c \tilde p^A$ and
$L \tilde x=\tilde s \tilde p^L$
and are thus general. The
changes are that (i) the projection matrices $B_k=U_{k+1}^TA H_k$ and $\bar{B}_k=\widehat{U}_k LH_k$ in (\ref{BTB+barBTbarB=I}) are no longer lower and upper bidiagonal and (ii) only the concerning
generalized singular value $c/s$ is simple and there is no requirement on the multiplicities of other generalized singular values.

The first key step toward a convergence analysis is to transform the GSVD problem and the Rayleigh--Ritz projection method for it into a standard eigenvalue problem and the corresponding Rayleigh--Ritz projection method for it, respectively. Define
\begin{equation}\label{matrixH}
H=Q_A^{\mathrm T}Q_A-Q_L^{\mathrm T}Q_L.
\end{equation}
It is known from \eqref{CS of QA and QL} and \eqref{X=R-1W} that $RX=W$ is orthogonal
and the eigendecomposition of $H$ is
\begin{equation}
\label{eigendecomposition of H}
H=W\left(C_A^{\mathrm T}C_A-S_L^{\mathrm T}S_L\right)W^{\mathrm T},
\end{equation}
and from \eqref{GSVD of Bk and Bkbar} that the eigenvalues and eigenvector matrix of
$B_k^{\mathrm T}B_k-\bar B_k^{\mathrm T}\bar B_k$ are the $\tilde c^2-\tilde s^2$ and $\widetilde W_k$.
Then the Rayleigh--Ritz projection method for the GSVD problem is
equivalent to the Rayleigh--Ritz projection method for the eigenvalue problem of $H$:
the projection
subspace $\operatorname{span}(V_k)$  with
$V_k$ column orthonormal, the projection matrix
$$
V_k^{\mathrm T}HV_k=
V_k^{\rm T}Q_A^{\rm T}Q_AV_k-V_k^{\rm T}Q_L^{\rm T}Q_LV_k=B_k^{\mathrm T}B_k-\bar B_k^{\mathrm T}\bar B_k,
$$
the Ritz values $\tilde c^2-\tilde s^2$, and
the Ritz vectors $R\tilde x=V_k\tilde w$
with $\tilde{w}$ being the columns of $\widetilde W_k$.  For the JBD method,
the projection matrix $B_k^{\mathrm T}B_k-\bar B_k^{\mathrm T}\bar B_k$ is symmetric tridiagonal 

\subsection{Convergence of the Ritz values}\label{subsec:3.1}

With the preceding equivalence transformations of the problems and the Rayleigh--Ritz methods,
Theorem 4.1 of \cite{JiaandStewart2001} leads to the following theorem directly.

\begin{theorem}\label{thm:ritzvalue}
Let $\delta=\frac{\epsilon}{\sqrt{1-\epsilon^2}}\|H\|$. There is a matrix $E_k$ satisfying $\left\|E_k\right\|\leqslant\delta$ such that $c^2-s^2$ is an eigenvalue of the perturbed matrix $B_k^{\mathrm T}B_k-\bar B_k^{\mathrm T}\bar B_k+E_k$.
\end{theorem}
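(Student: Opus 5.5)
The plan is to reduce the statement to the standard a priori result for Rayleigh--Ritz approximations of eigenvalues, Theorem 4.1 of \cite{JiaandStewart2001}, using the equivalence established just before \Cref{subsec:3.1}. There it was shown that $H=Q_A^{\mathrm T}Q_A-Q_L^{\mathrm T}Q_L$ is symmetric. By \eqref{eigendecomposition of H}, $H$ has the eigenpair $(c^2-s^2,\,Rx)$ with $\|Rx\|=1$, because $RX=W$ is orthogonal. The Rayleigh quotient of $H$ on the orthonormal basis $V_k$ is exactly $V_k^{\mathrm T}HV_k=B_k^{\mathrm T}B_k-\bar B_k^{\mathrm T}\bar B_k$. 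The claim therefore becomes: if the unit eigenvector $y:=Rx$ of a symmetric $H$ with eigenvalue $\lambda:=c^2-s^2$ satisfies $\sin\angle(y,V_k)=\epsilon$, then $\lambda$ is an eigenvalue of $V_k^{\mathrm T}HV_k+E_k$ with $\|E_k\|\le \frac{\epsilon}{\sqrt{1-\epsilon^2}}\|H\|$.

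To make the argument self-contained, I would rederive that bound directly, as follows.

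\textbf{Step 1 (decompose $y$).} Write $y=V_k f+g$ with $f=V_k^{\mathrm T}y$ and $g\perp \operatorname{span}(V_k)$. Then $\|f\|=\sqrt{1-\epsilon^2}$ and $\|g\|=\epsilon$. We need $\epsilon<1$, so that $f\neq0$; otherwise the statement is vacuous with $\delta=\infty$.

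\textbf{Step 2 (residual of the projected problem).} Multiply $Hy=\lambda y$ by $V_k^{\mathrm T}$. This gives $V_k^{\mathrm T}HV_kf+V_k^{\mathrm T}Hg=\lambda f$. Setting $\hat f=f/\|f\|$, we obtain
\[
(V_k^{\mathrm T}HV_k-\lambda I)\hat f=-r,\qquad r:=\frac{V_k^{\mathrm T}Hg}{\|f\|},\qquad \|r\|\le\frac{\epsilon}{\sqrt{1-\epsilon^2}}\|H\|=\delta .
\]

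\textbf{Step 3 (rank-one correction).} Define $E_k=r\hat f^{\mathrm T}$. Then $(V_k^{\mathrm T}HV_k+E_k)\hat f=\lambda\hat f$, so $\lambda=c^2-s^2$ is an eigenvalue of $B_k^{\mathrm T}B_k-\bar B_k^{\mathrm T}\bar B_k+E_k$. Moreover $\|E_k\|=\|r\|\le\delta$. If a symmetric perturbation is preferred, take instead $E_k=r\hat f^{\mathrm T}+\hat f r^{\mathrm T}-(\hat f^{\mathrm T}r)\hat f\hat f^{\mathrm T}$. It is not needed here, since the statement does not demand symmetry. In either case $E_k$ has norm at most $\delta$.

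The only point needing care is the bound $\|V_k^{\mathrm T}Hg\|\le\|H\|\epsilon$, together with the identification $\sin\angle(y,V_k)=\|g\|$. Both are immediate, because $V_k$ is orthonormal and $\|y\|=1$. The only mild obstacle is making sure the equivalence transformation is stated precisely: that $Rx$ is a unit eigenvector of $H$, and that the projection matrix is exactly $B_k^{\mathrm T}B_k-\bar B_k^{\mathrm T}\bar B_k$ for the general subclass. Both facts were recorded before the theorem.
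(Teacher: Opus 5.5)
Your proof is correct and follows the paper's route. The paper reduces the statement to the Rayleigh--Ritz method for the symmetric matrix $H$ and invokes Theorem 4.1 of \cite{JiaandStewart2001}; your Steps 1--3 reproduce exactly that argument, namely the residual bound $\|r\|\leqslant\delta$ (the source of \eqref{tmp202502031044}) followed by the rank-one correction $E_k=r\hat f^{\mathrm T}$.

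Your unused aside on a symmetric $E_k$ is wrong. Since $\hat f^{\mathrm T}r=g^{\mathrm T}(\lambda I-H)g/\|f\|^2$ is generally nonzero, the matrix $r\hat f^{\mathrm T}+\hat f r^{\mathrm T}-(\hat f^{\mathrm T}r)\hat f\hat f^{\mathrm T}$ has norm $\frac12\bigl(|\hat f^{\mathrm T}r|+\sqrt{(\hat f^{\mathrm T}r)^2+4\|r-(\hat f^{\mathrm T}r)\hat f\|^2}\bigr)$, and this can reach $\frac{2}{\sqrt3}\|r\|$. A symmetric choice with norm exactly $\|r\|$ is $\|r\|$ times the Householder reflector mapping $\hat f$ to $r/\|r\|$.
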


This theorem indicates that, as $\epsilon\rightarrow 0$, there is an eigenvalue of $B_k^{\mathrm T}B_k-\bar B_k^{\mathrm T}\bar B_k$ that converges to $c^2-s^2$.
Moreover, from (4.3) and (4.4) in \cite{JiaandStewart2001}, we obtain
\begin{equation}
\label{tmp202502031044}
\left\|\left(B_k^{\mathrm T} B_k-\bar{B}_k^{\mathrm T} \bar{B}_k\right) \frac{V_k^{\mathrm T} Rx}{\left\|V_k^{\mathrm T} Rx\right\|}-\left(c^2-s^2\right) \frac{V_k^{\mathrm T} Rx}{\left\|V_k^{\mathrm T} Rx\right\|}\right\| \leqslant\delta.
\end{equation}
Regard $(c^2-s^2,\frac{V_k^{\mathrm T} Rx}{\|V_k^{\mathrm T} Rx\|})$ as an approximate eigenpair of $B_k^{\mathrm T}B_k-\bar B_k^{\mathrm T}\bar B_k$. Then \eqref{tmp202502031044} establishes a compact upper bound for its residual norm. Thus, by the standard perturbation theory (cf. Corollary 3.3 in \cite[p.61]{book:SaadEigenvalue}), we have the following error bound.
\begin{theorem}
\label{th: eigenvalue of BkTBk-barBkTbarBk}
There is an eigenvalue $\tilde c^2-\tilde s^2$ of $B_k^{\mathrm T}B_k-\bar B_k^{\mathrm T}\bar B_k$ such that
\begin{equation}
\label{tmp202502031059}
\left|\tilde c^2-\tilde s^2-(c^2-s^2)\right|\leqslant\delta,
\end{equation}
meaning that $\tilde c^2-\tilde s^2\rightarrow c^2-s^2$ as $\epsilon\rightarrow 0$.
\end{theorem}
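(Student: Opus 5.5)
The plan is to derive the bound directly from the residual estimate \eqref{tmp202502031044}, combined with the standard residual bound for symmetric matrices. Set $G_k=B_k^{\mathrm T}B_k-\bar B_k^{\mathrm T}\bar B_k=V_k^{\mathrm T}HV_k$. This matrix is symmetric, so it has an orthonormal eigenbasis $\widetilde W_k$ with eigenvalues $\tilde c_i^2-\tilde s_i^2$. Next take $z=V_k^{\mathrm T}Rx/\|V_k^{\mathrm T}Rx\|$, which is well defined because $\|V_k^{\mathrm T}Rx\|=\cos\angle(Rx,V_k)=\sqrt{1-\epsilon^2}>0$ whenever $\epsilon<1$. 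With $\theta=c^2-s^2$, estimate \eqref{tmp202502031044} says exactly that $\|G_kz-\theta z\|\leqslant\delta$.

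The key step is the classical bound for symmetric matrices (Corollary 3.3 in \cite[p.61]{book:SaadEigenvalue}). Expand $z=\sum_i \gamma_i\tilde w_i$ with $\sum_i\gamma_i^2=1$. Then
$$\|G_kz-\theta z\|^2=\sum_i \gamma_i^2\bigl(\tilde c_i^2-\tilde s_i^2-\theta\bigr)^2\geqslant \min_i\bigl(\tilde c_i^2-\tilde s_i^2-\theta\bigr)^2 .$$
Hence some eigenvalue $\tilde c^2-\tilde s^2$ of $G_k$ satisfies $|\tilde c^2-\tilde s^2-(c^2-s^2)|\leqslant\delta$, which is \eqref{tmp202502031059}.

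It remains to handle the limit. Since $\|H\|\leqslant 1$ (indeed $H=W(C_A^{\mathrm T}C_A-S_L^{\mathrm T}S_L)W^{\mathrm T}$ has eigenvalues in $[-1,1]$), we have $\delta\leqslant \epsilon/\sqrt{1-\epsilon^2}\to0$ as $\epsilon\to0$, which gives the convergence statement.

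The only substantive point is the residual estimate \eqref{tmp202502031044} itself, which comes from \cite{JiaandStewart2001}. If I had to verify it directly, I would write $Rx=V_kV_k^{\mathrm T}Rx+(I-V_kV_k^{\mathrm T})Rx$ and use $HRx=\theta Rx$ to get $V_k^{\mathrm T}HV_kV_k^{\mathrm T}Rx-\theta V_k^{\mathrm T}Rx=-V_k^{\mathrm T}H(I-V_kV_k^{\mathrm T})Rx$. The norm of the right-hand side is at most $\|H\|\epsilon$, and dividing by $\|V_k^{\mathrm T}Rx\|=\sqrt{1-\epsilon^2}$ yields $\delta$.
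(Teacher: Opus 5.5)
Your proposal is correct and follows essentially the same route as the paper, which also takes the residual bound \eqref{tmp202502031044} from Jia and Stewart and then applies the standard symmetric residual bound (Saad's Corollary 3.3) to $V_k^{\mathrm T}HV_k=B_k^{\mathrm T}B_k-\bar B_k^{\mathrm T}\bar B_k$. Your direct verifications of both ingredients are sound and make the cited results self-contained: the splitting of the unit vector $Rx$ with $HRx=(c^2-s^2)Rx$, and the eigen-expansion of $z$.
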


We present the following interlacing
property of generalized singular values and Ritz values obtained by the JBD method.

\begin{theorem}\label{interlacetheorem}
Suppose \cref{alg:JBD} does not break down before step $k$ and
the nontrivial generalized singular values $\{\frac{c_i}{s_i}\}_{i=1}^n$ of $\{A,L\}$ are simple and labeled as \eqref{labelrule}, and
label the $k$ Ritz values $\{\frac{\tilde c_i}{\tilde s_i}\}_{i=1}^k$ in the same manner. Then
\begin{equation}
\label{interlacing theorem eq1}
\frac{c_{i+\left(n-k\right)}}{s_{i+\left(n-k\right)}}<\frac{\tilde c_i}{\tilde s_i}<\frac{c_i}{s_i},\quad i=1,\dots,k.
\end{equation}
If the matrix pair $\{A,L\}$ has $q_1$ zero and $q_2$
infinite generalized singular values, written as
$\{c_0,s_0\}=\{1,0\}$ and $\{c_{q+1},s_{q+1}\}=\{0,1\}$, respectively, then
\begin{equation}
\label{interlacing theorem eq2}
\frac{c_{i+\left(q+1-k\right)}}{s_{i+\left(q+1-k\right)}}<\frac{\tilde c_i}{\tilde s_i}<\frac{c_{i-1}}{s_{i-1}},\quad i=1,\dots,k,
\end{equation}
where $q=n-q_1-q_2$ {\rm (cf. \eqref{GSVD in matrix terms} and \eqref{GSVD in vector terms})}.
\end{theorem}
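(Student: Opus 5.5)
The plan is to reduce everything to the Cauchy interlacing theorem for the symmetric matrix $H=Q_A^{\mathrm T}Q_A-Q_L^{\mathrm T}Q_L$ of \eqref{matrixH}, using the equivalence established just before \cref{subsec:3.1}. By \eqref{eigendecomposition of H}, the eigenvalues of $H$ are the numbers $c_i^2-s_i^2$ for $i=1,\dots,n$, where the trivial components contribute $1$ (infinite values, $\{1,0\}$) and $-1$ (zero values, $\{0,1\}$). The Ritz values $\tilde c_i^2-\tilde s_i^2$ are the eigenvalues of the compression $V_k^{\mathrm T}HV_k=B_k^{\mathrm T}B_k-\bar B_k^{\mathrm T}\bar B_k$, and $V_k$ is column orthonormal.

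The map $c/s\mapsto c^2-s^2$ is strictly increasing on $c^2+s^2=1$, $c,s\ge0$; indeed $c^2-s^2=(t^2-1)/(t^2+1)$ with $t=c/s$. Hence ordering by generalized singular values is the same as ordering eigenvalues in descending order. The Poincaré separation theorem (Cauchy interlacing for compressions) then gives
\[
\lambda_{i+n-k}(H)\le \lambda_i(V_k^{\mathrm T}HV_k)\le \lambda_i(H),\qquad i=1,\dots,k.
\]
This yields \eqref{interlacing theorem eq1} with non-strict inequalities.

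For \eqref{interlacing theorem eq2}, I would argue that the Krylov subspace $\operatorname{span}(V_k)=\mathcal K_k(Q_A^{\mathrm T}Q_A,v_1)$ is orthogonal to the eigenvectors of $H$ for the eigenvalues $\pm1$, except possibly one representative direction per eigenvalue. The key fact is that $v_1=Q_A^{\mathrm T}u_1/\|Q_A^{\mathrm T}u_1\|$ lies in $\mathcal R(Q_A^{\mathrm T})$, which is orthogonal to the null space of $Q_A$, i.e. to $RX_{q_1}=W_{q_1}$. So $\operatorname{span}(V_k)$ has no component along the eigenvalue $-1$ eigenspace. For the eigenvalue $+1$ eigenspace, the Krylov subspace generated by $H$ (equivalently $Q_A^{\mathrm T}Q_A$, since $H=2Q_A^{\mathrm T}Q_A-I$) only sees the projection of $v_1$ onto that eigenspace, which is a single vector.

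Consequently I would restrict $H$ to the invariant subspace $\mathcal S$ spanned by $W_q$ together with the single normalized projection of $v_1$ onto the $+1$ eigenspace (if that projection is nonzero). This restriction has eigenvalues $c_0^2-s_0^2=1$ (at most once) followed by $c_1^2-s_1^2>\cdots>c_q^2-s_q^2$; its dimension is at most $q+1$. Applying Poincaré inside $\mathcal S$ with the index shift $q+1-k$ gives \eqref{interlacing theorem eq2}. If the $+1$ component is absent, the dimension is $q$ and the bound is even tighter. I would handle the bookkeeping of whether $c_0$ is present by conventions matching \eqref{labelrule}.

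Strictness is the main obstacle. I would use that $V_k^{\mathrm T}HV_k$ is an unreduced symmetric tridiagonal matrix, by nonbreakdown and the relation $\hat\alpha_i\hat\beta_i=\alpha_{i+1}\beta_{i+1}$. Since $\operatorname{span}(V_k)$ is a Krylov subspace of $H$ restricted to $\mathcal S$, on which $H$ has simple eigenvalues and $v_1$ has nonzero components along all of them (otherwise breakdown would occur earlier or $\mathcal S$ could be shrunk), $V_k^{\mathrm T}HV_k$ is the $k$-th Lanczos tridiagonal. Its eigenvalues are the zeros of the $k$-th Lanczos polynomial. These interlace strictly with the eigenvalues of the $(k+1)$-st leading block, and iterating up to the full dimension of $\mathcal S$ (where the Lanczos matrix is similar to $H|_{\mathcal S}$) gives strict inequalities via the standard Sturm-sequence argument: an unreduced tridiagonal and its leading principal submatrix share no eigenvalue. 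Applying this repeatedly yields strict inequality at each index, and mapping back through the monotone map $c^2-s^2\mapsto c/s$ completes the proof.
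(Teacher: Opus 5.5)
Your proposal is correct. For the first case it follows the paper's line: interlacing for the compression $V_k^{\mathrm T}HV_k=B_k^{\mathrm T}B_k-\bar B_k^{\mathrm T}\bar B_k$ of $H$, with strictness coming from the unreduced tridiagonal structure. The difference is that you spell out the Lanczos mechanism behind the strictness, which the paper cites in one line as a strict interlacing theorem.

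For the second case your route is genuinely different and more rigorous. The paper simply relabels the indices in \eqref{interlacing theorem eq1}, which implicitly treats $1$ and $-1$ as if each were a simple eigenvalue of $H$. Plain interlacing with $H$, which has these eigenvalues with multiplicities $q_2$ and $q_1$, gives much less; for example, only $\tilde c_i^2-\tilde s_i^2\le 1$ for all $i\le q_2$. Two of your observations are exactly what justifies the relabeling: $v_1\in\mathcal R(Q_A^{\mathrm T})\perp\mathcal N(Q_A)$, and the Krylov subspace meets the $+1$ eigenspace in at most one direction.

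They give even more than the statement. If the $+1$ direction is present, $\dim\mathcal S=q+1$ and $\lambda_j(H|_{\mathcal S})=c_{j-1}^2-s_{j-1}^2$. Otherwise $\mathcal S$ is $q$-dimensional and $\lambda_j(H|_{\mathcal S})=c_j^2-s_j^2$. In both cases your lower bound is $c_{i+q-k}/s_{i+q-k}$, which is sharper than the stated $c_{i+q+1-k}/s_{i+q+1-k}$ and implies it. Mind this off-by-one when you say that the shift $q+1-k$ ``gives'' \eqref{interlacing theorem eq2}.

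Two points to tighten. First, strictness requires $\alpha_{k+1}\beta_{k+1}\neq 0$, i.e., no breakdown at step $k$ itself. Otherwise $\operatorname{span}(V_k)$ is $H$-invariant, the Ritz values are exact eigenvalues of $H|_{\mathcal S}$, and equality occurs. Your iteration ``up to the full dimension'' tacitly uses $k<\dim\mathcal K(H,v_1)$, so state this assumption explicitly.

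Second, no iteration is actually needed. Let $\theta_i^{(j)}$ denote the eigenvalues of $T_j=V_j^{\mathrm T}HV_j$ in descending order, and let $d=\dim\mathcal S$ (in the first case, $\mathcal S=\mathbb R^n$ and $d=n$). Combine strict interlacing of $T_k$ inside the unreduced $T_{k+1}$ with non-strict Poincar\'e for $T_{k+1}$ as a compression of $H|_{\mathcal S}$. This gives $\lambda_{i+d-k}(H|_{\mathcal S})\le\theta_{i+1}^{(k+1)}<\theta_i^{(k)}<\theta_i^{(k+1)}\le\lambda_i(H|_{\mathcal S})$, which is all you need.
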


\begin{proof}
Notice that $V_k^{\mathrm T}HV_k=B_k^{\mathrm T}B_k-\bar B_k^{\mathrm T}\bar B_k$ is an unreduced symmetric tridiagonal matrix. By the eigenvalue strict interlacing theorem \cite[p.203]{Parlett1980symmetric} of symmetric matrices, for the first case, the eigenvalues $\{c_i^2-s_i^2\}_{i=1}^n$ and $\{\tilde c_i^2-\tilde s_i^2\}_{i=1}^k$ of $H$ and $B_k^{\mathrm T}B_k-\bar B_k^{\mathrm T}\bar B_k$ satisfy
$$
c_{i+\left(n-k\right)}^2-s_{i+\left(n-k\right)}^2<\tilde c_i^2-\tilde s_i^2< c_i^2- s_i^2,\quad i=1,\dots,k,
$$
which establishes \eqref{interlacing theorem eq1} by a simple manipulation;
for the second case, changing
the subscripts $i$ and $n$ in the lower and upper bounds of (\ref{interlacing theorem eq1}) to $i-1$
and $q+1$, respectively, we obtain
$$
c_{i+\left(q+1-k\right)}^2-s_{i+\left(q+1-k\right)}^2<\tilde c_i^2-\tilde s_i^2< c_{i-1}^2- s_{i-1}^2,\quad i=1,\dots,k,
$$
which leads to (\ref{interlacing theorem eq2}).
\end{proof}

\begin{remark}
This theorem indicates that (i)
the largest Ritz values $\frac{\tilde c}{\tilde s}$ approach the corresponding largest distinct
generalized singular values $\frac cs$ from below, and the signs in the absolute values in
the left-hand side of (\ref{tmp202502031059}) are always negative, and (ii)
the smallest Ritz values approach the corresponding smallest distinct
generalized singular values from above, and the signs in the absolute values of the left-hand side of (\ref{tmp202502031059}) are always positive.
\end{remark}

\begin{remark}
For the general Rayleigh--Ritz projection method, the projection matrix $V_k^THV_k$ is symmetric but no longer tridiagonal. The interlacing properties \eqref{interlacing theorem eq1} and \eqref{interlacing theorem eq2} still hold with the strict inequality signs $<$ in them replaced by  inequality signs $\leq$ when assuming that the method can compute generalized
singular values counting multiplicities.
Particularly, this leads to a general interlacing property of the generalized
singular values of $\{A,L\}$ and those of the submatrix pair consisting of arbitrary $k$
columns of $\{A,L\}$ when taking $V_k$ to be a matrix consisting of arbitrary columns of
the identity matrix $I_n$. As a matter of fact, it is readily justified that
such general interlacing property
does not require to assume that the generalized singular values are simple.
\end{remark}

Next we take a closer look at (\ref{tmp202502031059}) and get insight into the convergence of the Ritz
values $\frac{\tilde{c}}{\tilde{s}}$ themselves rather than the differences
$\tilde{c}^2-\tilde{s}^2$. The chordal metric
$$\chi\left(\lambda,\mu\right)=\frac{\left|\lambda-\mu\right|}{\sqrt{1+|\lambda|^2}\sqrt{1+|\mu|^2}}$$
for two nonzero scalars $\lambda$ and $\mu$ is a standard gauge
to measure errors of approximate generalized
eigenvalues of a regular matrix pair \cite[Ch.15]{Parlett1980symmetric} and
\cite[Ch.2]{book:StewartMatrixAlgorithms}. In what follows we study
the convergence of the Ritz values
using the chordal metric.

\begin{theorem}
\label{Th:error of generalized singular value}
For the general Rayleigh--Ritz projection method, there is a Ritz value, i.e., generalized singular value $\frac{\tilde c}{\tilde s}$ of the matrix pair
$\{V_k^{\rm T}Q_A^{\rm T}Q_AV_k,V_k^{\rm T}Q_L^{\rm T}Q_LV_k \}$, such that
\begin{equation}
\label{error of square of generalized singular value}
\chi\left(\frac{\tilde c^2}{\tilde s^2},\frac{c^2}{s^2}\right)\leqslant\delta,
\end{equation}
where $\delta$ is defined in \cref{thm:ritzvalue}.
For a nonzero finite generalized singular value $\frac{c}{s}$, if $\epsilon$
defined by \eqref{epsilonk} is small enough such that $\delta\leqslant \min\{2c^2,2s^2\}$, then
\begin{equation}
\label{error of nontrival generalized singular value}
\chi\left(\frac{\tilde c}{\tilde s}, \frac{c}{s}\right)=\frac{\delta}{4 c s}+O\left(\delta^2\right).
\end{equation}
\end{theorem}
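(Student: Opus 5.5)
The plan is to reduce everything to the scalar relations $c^2+s^2=1$ and $\tilde c^2+\tilde s^2=1$, and then feed in the eigenvalue bound of \cref{th: eigenvalue of BkTBk-barBkTbarBk}. Take the Ritz pair $\{\tilde c,\tilde s\}$ whose $\tilde c^2-\tilde s^2$ satisfies \eqref{tmp202502031059}. This $\tilde c^2-\tilde s^2$ is an eigenvalue of $V_k^{\rm T}HV_k$, so $\tilde c/\tilde s$ is a generalized singular value of $\{V_k^{\rm T}Q_A^{\rm T}Q_AV_k,V_k^{\rm T}Q_L^{\rm T}Q_LV_k\}$ in the sense of the statement. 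The key identity is
$$\tilde c^2s^2-c^2\tilde s^2=\tilde c^2(1-c^2)-c^2(1-\tilde c^2)=\tilde c^2-c^2=\tfrac12\bigl[(\tilde c^2-\tilde s^2)-(c^2-s^2)\bigr],$$
which gives $|\tilde c^2-c^2|\leqslant \delta/2$ by \cref{th: eigenvalue of BkTBk-barBkTbarBk}.

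\emph{The bound \eqref{error of square of generalized singular value}.} Substituting $\lambda=\tilde c^2/\tilde s^2$ and $\mu=c^2/s^2$ into the chordal metric and clearing the factors $\tilde s^2s^2$ yields
$$\chi\Bigl(\frac{\tilde c^2}{\tilde s^2},\frac{c^2}{s^2}\Bigr)=\frac{|\tilde c^2s^2-c^2\tilde s^2|}{\sqrt{(\tilde c^4+\tilde s^4)(c^4+s^4)}}.$$
This formula also remains valid by continuity when $s$ or $\tilde s$ vanishes. Since $a^2+b^2\geqslant \frac12(a+b)^2$, we have $c^4+s^4\geqslant\frac12$ and likewise $\tilde c^4+\tilde s^4\geqslant\frac12$. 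Hence the denominator is at least $\frac12$, and the numerator is at most $\delta/2$ by the identity above. This gives $\chi\leqslant\delta$.

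\emph{The first-order estimate \eqref{error of nontrival generalized singular value}.} Here the same substitution gives
$$\chi\Bigl(\frac{\tilde c}{\tilde s},\frac cs\Bigr)=\frac{|\tilde c s-c\tilde s|}{\sqrt{(\tilde c^2+\tilde s^2)(c^2+s^2)}}=|\tilde cs-c\tilde s|.$$
Parametrize $c=\sin\theta$, $s=\cos\theta$ and $\tilde c=\sin\tilde\theta$, $\tilde s=\cos\tilde\theta$, with $\theta,\tilde\theta\in[0,\pi/2]$. Then
$$|\tilde cs-c\tilde s|=|\sin(\tilde\theta-\theta)|, \qquad \tilde c^2-c^2=\sin(\tilde\theta-\theta)\sin(\tilde\theta+\theta).$$
Therefore
$$\chi=\frac{|\tilde c^2-c^2|}{\sin(\tilde\theta+\theta)}=\frac{|\tilde c^2-c^2|}{\tilde cs+c\tilde s}.$$

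\emph{The main obstacle} is controlling the denominator $\tilde cs+c\tilde s$ and justifying the expansion. The hypothesis $\delta\leqslant\min\{2c^2,2s^2\}$ is used exactly here. It forces $\tilde c^2\in[c^2-\delta/2,\,c^2+\delta/2]\subseteq[0,1]$, so $\tilde\theta$ stays in $[0,\pi/2]$ and the parametrization is legitimate. It also keeps the denominator away from zero whenever $c,s>0$. Since $|\tilde c^2-c^2|\leqslant\delta/2$, we get $\tilde c=c+O(\delta)$ and $\tilde s=s+O(\delta)$. A Taylor expansion then gives $\tilde cs+c\tilde s=2cs+O(\delta)$. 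Combining this with $|\tilde c^2-c^2|\leqslant\delta/2$ yields $\chi\leqslant\frac{\delta}{4cs}+O(\delta^2)$. I read the ``$=$'' in \eqref{error of nontrival generalized singular value} as this first-order bound, sharp in the worst case $|\tilde c^2-c^2|=\delta/2$.
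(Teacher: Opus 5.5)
Your proof is correct. The first bound \eqref{error of square of generalized singular value} is obtained exactly as in the paper. It uses the identity $\tilde c^2s^2-c^2\tilde s^2=\tilde c^2-c^2$, the consequence $|\tilde c^2-c^2|\leqslant\delta/2$ of \eqref{tmp202502031059}, and the lower bounds $c^4+s^4\geqslant\frac12$ and $\tilde c^4+\tilde s^4\geqslant\frac12$.

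For \eqref{error of nontrival generalized singular value} you take a genuinely different route.
\begin{itemize}
\item The paper observes that $\tilde cs-\tilde sc$ is increasing in $\tilde c$. So over $\tilde c\in[\sqrt{c^2-\delta/2},\sqrt{c^2+\delta/2}]$, the quantity $|\tilde cs-\tilde sc|$ is maximized at an endpoint. It then Taylor-expands the two endpoint values $c\sqrt{s^2+\delta/2}-s\sqrt{c^2-\delta/2}$ and $s\sqrt{c^2+\delta/2}-c\sqrt{s^2-\delta/2}$. The hypothesis $\delta\leqslant\min\{2c^2,2s^2\}$ is used to keep these square roots in $[0,1]$.
\item You instead use the exact identity $\chi=|\tilde c^2-c^2|/(\tilde cs+c\tilde s)$. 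This reuses the numerator bound from the first part and leaves only a denominator estimate. The angle parametrization is not needed: $(\tilde cs-c\tilde s)(\tilde cs+c\tilde s)=\tilde c^2s^2-c^2\tilde s^2=\tilde c^2-c^2$ directly.
\end{itemize}

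Your route is shorter and yields more. Since $\tilde c,\tilde s\geqslant 0$ and $\tilde c+\tilde s\geqslant 1$, you get $\tilde cs+c\tilde s\geqslant\min\{c,s\}$. Hence $\chi\leqslant\delta/(2\min\{c,s\})$ holds for every $\delta$, with no smallness assumption. This is weaker than $\delta/(4cs)$ only by the factor $2\max\{c,s\}<2$.

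This also shows that your stated role for the hypothesis $\delta\leqslant\min\{2c^2,2s^2\}$ is slightly misattributed. The facts $\tilde\theta\in[0,\pi/2]$ and $\tilde cs+c\tilde s>0$ hold automatically from $\tilde c,\tilde s\geqslant 0$, $\tilde c^2+\tilde s^2=1$ and $c,s>0$. The inclusion $\tilde c^2\in[c^2-\delta/2,c^2+\delta/2]$ comes from \eqref{tmp202502031059}, not from the hypothesis.

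The paper's endpoint argument, in turn, identifies the exact worst case over the admissible interval before expanding. Both arguments establish only the upper bound $\frac{\delta}{4cs}+O(\delta^2)$. So your reading of the ``$=$'' as a first-order bound matches what the paper actually proves.
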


\begin{proof}
By definition, we have
\begin{equation}\label{cs2}
\chi\left(\frac{\tilde c^2}{\tilde s^2},\frac{c^2}{s^2}\right) =\frac{\left|\tilde{c}^2 s^2-c^2 \tilde{s}^2\right|}{\sqrt{\tilde{s}^4+\tilde{c}^4} \sqrt{s^4+c^4}}.
\end{equation}
By $\tilde s^2=1-\tilde c^2$ and $s^2=1-c^2$, relation
(\ref{tmp202502031059}) is equivalent to $\left|\tilde c^2-c^2\right|\leqslant \frac12\delta$. Thus,
$$
\left|\tilde{c}^2 s^2-c^2 \tilde{s}^2\right| =\left|\tilde{c}^2 \left(1-c^2\right)-c^2 \left(1-\tilde c^2\right)\right|=\left|\tilde{c}^2-c^2\right|=\left|\tilde{s}^2-s^2\right|\leqslant\frac12\delta.
$$
On the other hand,
$$
\begin{aligned}
\sqrt{\tilde s^4+\tilde c^4}\geqslant \frac{\sqrt2}{2}\left(\tilde s^2+\tilde c^2\right)=\frac{\sqrt2}{2},\\
\sqrt{ s^4+ c^4}\geqslant \frac{\sqrt2}{2}\left( s^2+ c^2\right)=\frac{\sqrt2}{2}.
\end{aligned}
$$
Substituting the above three relations into \eqref{cs2} establishes \eqref{error of square of generalized singular value}.

To prove \eqref{error of nontrival generalized singular value}, notice that $\tilde c$ and $\tilde s$ are
one-to-one correspondence. By definition, we obtain
$$
\chi(\frac{\tilde{c}}{\tilde{s}},
\frac{c}{s})=|\tilde{c} s-\tilde{s} c|.
$$
Regard it as a function of $\tilde c$. Then $\tilde{c} s-\tilde{s} c$ is
increasing with respect to $\tilde c$. From $|\tilde c^2-c^2|\leqslant \frac12\delta$,
it is known that
$\tilde c$ lies in the intervals
$\left[\sqrt{c^2-\frac12\delta},\sqrt{c^2+\frac12\delta}\right]$.
The assumption $\delta\leqslant \min\{2c^2,2s^2\}$ ensures that the square roots are between $0$ and $1$. Therefore, $\chi(\frac{\tilde{c}}{\tilde{s}}, \frac{c}{s})$ attains
the maximum at $\tilde c=\sqrt{c^2-\frac12\delta}$ or $\tilde c=\sqrt{c^2+\frac12\delta}$, and
$$
\begin{aligned}
\chi\left(\frac{\tilde{c}}{\tilde{s}}, \frac{c}{s}\right) & =\left|\tilde{c} s-\tilde{s} c\right| \\
& \leqslant \max \left\{\left|s \sqrt{c^2-\frac{1}{2} \delta}-c \sqrt{s^2+\frac{1}{2} \delta}\right|,\,\,\left|s \sqrt{c^2+\frac{1}{2} \delta}-c \sqrt{s^2-\frac{1}{2} \delta}\right|\right\} \\
& =\max \left\{c \sqrt{s^2+\frac{1}{2} \delta}-s \sqrt{c^2-\frac{1}{2} \delta},\,\, s \sqrt{c^2+\frac{1}{2} \delta}-c \sqrt{s^2-\frac{1}{2} \delta}\right\}.
\end{aligned}
$$
Using the Taylor expansions
$$
\sqrt{s^2+\frac{1}{2} \delta}=s+\frac{\delta}{4 s}+O\left(\delta^2\right)\quad \text{and}\quad\sqrt{c^2-\frac{1}{2} \delta}=c-\frac{\delta}{4 c}+O\left(\delta^2\right),
$$
we obtain
$$
c \sqrt{s^2+\frac{1}{2} \delta}-s \sqrt{c^2-\frac{1}{2} \delta}=\frac{\delta}{4 c s}+O\left(\delta^2\right).
$$
Similarly,
$$
s \sqrt{c^2+\frac{1}{2} \delta}-c \sqrt{s^2-\frac{1}{2} \delta}=\frac{\delta}{4 c s}+O\left(\delta^2\right).
$$
Thus (\ref{error of nontrival generalized singular value}) holds.
\end{proof}

\begin{remark}
If $\{A,L\}$ has infinite or zero generalized singular values, \eqref{error of square of generalized singular value} indicates that the largest or smallest Ritz value converges to infinite or zero
generalized singular value when the corresponding $\epsilon$ tends to zero, but \eqref{error of nontrival
generalized singular value} holds only for a nonzero finite generalized singular value.
\end{remark}

\begin{remark}
For the JBD method, it is known from \cite{Bjorck1996LeastSquares,Jia-IRRBL,Parlett1980symmetric} that
the $\epsilon$ defined by \eqref{epsilonk}
generally tend to zero first as $k$ increases
for the the right singular vectors corresponding to the extreme singular values
of $Q_A$ and $Q_L$. This means that the $\epsilon\rightarrow 0$ generally first for the right
generalized singular vectors corresponding to the extreme generalized singular values
of $\{A,L\}$. Therefore, the JBD method generally favor the extreme generalized
singular values as $k$ increases.
\end{remark}


\subsection{Convergence of the right Ritz vectors}\label{subsec:3.2}

Recall definition \eqref{matrixH} of $H$ and its eigendecomposition
\eqref{eigendecomposition of H}. Let $(Rx,RX_{\perp})$ be orthogonal, and define the matrix
\begin{equation}
\label{definition of Y_perp}
H_\perp=X_{\perp}^{\mathrm T} R^{\mathrm T}H R X_{\perp}.
\end{equation}
Then the eigenvalues of $H_\perp$ are the other eigenvalues of $H$ than $c^2-s^2$. Define
the separation of a desired eigenvalue $c^2-s^2$ and the spectrum of $H_\perp$ as
$$
\operatorname{sep}\left(c^2-s^2, H_\perp\right)=\sigma_{\min} \left(H_\perp-\left(c^2-s^2\right)I\right).
$$
Then, when
$\frac cs$ is a simple generalized singular value of $\{A,L\}$, $c^2-s^2$ is not an eigenvalue of $H_\perp$,
meaning $\operatorname{sep}(c^2-s^2, H_\perp)>0$. Theorem 3.1 in \cite{JiaandStewart2001} leads to the
following error bound for the Ritz vector $\tilde x$.

\begin{theorem}
\label{Th:error of Rx bounded by residual}
Suppose that  $(\tilde c^2-\tilde s^2,R\tilde x)$ is an approximation to
the eigenpair $(c^2-s^2,Rx)$ of $H$, and denote by
$\rho=\left\|H R \tilde{x}-\left(\tilde{c}^2-\tilde{s}^2\right) R \tilde{x}\right\|$
its residual norm. If
\begin{equation}
\label{assumption of sep(c^2-s^2,Yperp)}
\operatorname{sep}\left(c^2-s^2, H_\perp\right)-\left|\tilde c^2-\tilde{s}^2-\left(c^2-s^2\right)\right|>0,
\end{equation}
then
$$
\begin{aligned}
\sin\angle(\tilde x,x)_M &\leqslant\frac{\rho}{\operatorname{sep}\left(\tilde{c}^2-\tilde s^2,H_\perp\right)} \\
&\leqslant \frac{\rho}{\operatorname{sep}\left(c^2-s^2, H_\perp\right)-\left|\tilde c^2-\tilde{s}^2-\left(c^2-s^2\right)\right|}.
\end{aligned}
$$
\end{theorem}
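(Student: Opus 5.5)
We work directly in the eigen-formulation set up before \cref{thm:ritzvalue}. Write $y=R\tilde x$, a unit vector by \eqref{approximate generalized singular vectors from JBD}, and $\tilde\lambda=\tilde c^2-\tilde s^2$, $\lambda=c^2-s^2$. The matrix $(Rx,RX_\perp)$ is orthogonal, so we decompose
$$
y=\gamma\, Rx+RX_\perp g,\qquad |\gamma|=\cos\angle(R\tilde x,Rx),\quad \|g\|=\sin\angle(R\tilde x,Rx).
$$
We also have $\sin\angle(R\tilde x,Rx)=\sin\angle(\tilde x,x)_M$ because $M=R^{\rm T}R$. The goal therefore reduces to bounding $\|g\|$.

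\textbf{Step 1 (block form of $H$).} Since $H$ is symmetric and $Rx$ is a unit eigenvector with eigenvalue $\lambda$ by \eqref{eigendecomposition of H}, the subspace ${\rm span}(RX_\perp)$ is invariant under $H$. Hence $HRX_\perp=RX_\perp H_\perp$ with $H_\perp$ as in \eqref{definition of Y_perp}, and the off-diagonal blocks of $H$ in the basis $(Rx,RX_\perp)$ vanish.

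\textbf{Step 2 (projecting the residual).} Set $r=Hy-\tilde\lambda y$. Multiplying $r$ on the left by $X_\perp^{\rm T}R^{\rm T}$ and using Step 1 gives
$$
X_\perp^{\rm T}R^{\rm T} r=(H_\perp-\tilde\lambda I)g .
$$
Since $H_\perp$ is symmetric, $\|(H_\perp-\tilde\lambda I)g\|\ge \sigma_{\min}(H_\perp-\tilde\lambda I)\|g\|={\rm sep}(\tilde\lambda,H_\perp)\|g\|$. Because $\|X_\perp^{\rm T}R^{\rm T}\|=1$, the left side is at most $\rho$. Provided ${\rm sep}(\tilde\lambda,H_\perp)>0$, this yields the first inequality of the theorem.

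\textbf{Step 3 (perturbing the separation).} For symmetric $H_\perp$, ${\rm sep}(\mu,H_\perp)=\min_j|\lambda_j(H_\perp)-\mu|$ is $1$-Lipschitz in $\mu$. Equivalently, by Weyl's inequality applied to $\sigma_{\min}$, we have ${\rm sep}(\tilde\lambda,H_\perp)\ge {\rm sep}(\lambda,H_\perp)-|\tilde\lambda-\lambda|$. Assumption \eqref{assumption of sep(c^2-s^2,Yperp)} makes the right-hand side positive. This justifies the division in Step 2 and gives the second inequality.

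The only point requiring care is Step 1. There we must confirm that $X_\perp^{\rm T}R^{\rm T}HRx=\lambda X_\perp^{\rm T}R^{\rm T}Rx=0$, so no coupling term $\gamma(\cdot)$ survives in the projected residual. This follows from the orthogonality of $(Rx,RX_\perp)$, since $RX=W$ is orthogonal. The rest is the standard argument of Theorem 3.1 in \cite{JiaandStewart2001}, transplanted to the $M$-inner product.
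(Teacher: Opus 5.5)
Your argument is correct and follows essentially the paper's route. The paper transfers the problem to the symmetric eigenproblem for $H$ and simply invokes Theorem 3.1 of \cite{JiaandStewart2001}; what you wrote out (project the residual onto the invariant complement ${\rm span}(RX_\perp)$, bound it below by $\sigma_{\min}(H_\perp-\tilde\lambda I)\|g\|$, then apply Weyl's inequality to the separation) is the standard proof of that cited result, including your correct use of $\|R\tilde x\|=\|V_k\tilde w\|=1$.
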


\Cref{th: eigenvalue of BkTBk-barBkTbarBk} has shown that $|\tilde c^2-\tilde s^2-(c^2-s^2)|\rightarrow0$ as
$\epsilon\rightarrow0$. Since $\frac cs$ is supposed to be
a simple generalized singular value of $\{A,L\}$, \eqref{assumption of sep(c^2-s^2,Yperp)} must hold for $|\tilde c^2-\tilde s^2-(c^2-s^2)|$ sufficiently small.
\Cref{Th:error of Rx bounded by residual} indicates that if the residual norm $\rho$ tends to $0$ then
$\sin\angle(\tilde x,x)_M\rightarrow0$; that is, the right Ritz vector $\tilde x\rightarrow x$,
and its error in the $M$-inner product
can be measured in terms of the computable residual norm $\rho$ and the separation of $c^2-s^2$ from
the other eigenvalues of $H$.

It is instructive to see how the right Ritz vector $\tilde x$
can fail to converge even if $\epsilon\rightarrow 0$.
Notice that $R\tilde{x}=V_k\tilde{w}$
with $\tilde{w}$ being the
eigenvector of $V_k^{\rm T}HV_k=V_k^{\rm T}Q_A^{\rm T}Q_AV_k-V_k^{\rm T}Q_L^{\rm T}Q_LV_k$
corresponding to the eigenvalue $\tilde c^2-\tilde
s^2$. Let $(\tilde w,\widetilde W_{\perp})$ be orthogonal, and define the matrix
$$
C_k=\widetilde{W}_{\perp}^{\mathrm T}(V_k^{\rm T}Q_A^{\rm T}Q_AV_k-V_k^{\rm T}Q_L^{\rm T}Q_LV_k) \widetilde{W}_\perp.
$$
Then we have the following result, which is adapted from Theorem 5.1 of \cite{JiaandStewart2001}.

\begin{theorem}
\label{Th:error of right generalized singular vector}
If
$\epsilon$ is small enough such that
$$
\operatorname{sep}\left(\tilde c^2-\tilde s^2, C_k\right)-\left|\tilde c^2-\tilde s^2-\left(c^2-s^2\right)\right|>0,
$$
then
$$
\sin \angle\left(\tilde x, x\right)_M
\leqslant\left(1+\frac{\|H\|}{\sqrt{1-\epsilon^2} \left(\operatorname{sep}\left(\tilde c^2-\tilde s^2, C_k\right)-\left|\tilde c^2-\tilde s^2-\left(c^2-s^2\right)\right|\right)}\right) \epsilon.
$$
\end{theorem}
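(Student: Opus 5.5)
Via the equivalence of \cref{sec:3}, I will treat the statement as the Jia--Stewart a priori bound for a Ritz vector of the symmetric matrix $H$ extracted from $\operatorname{span}(V_k)$. The chain is $\sin\angle(\tilde x,x)_M=\sin\angle(R\tilde x,Rx)=\sin\angle(V_k\tilde w,Rx)$. Let $y=V_k^{\mathrm T}Rx/\|V_k^{\mathrm T}Rx\|$ be the normalized orthogonal projection of $Rx$ onto $\operatorname{span}(V_k)$, written in coordinates, so that $\sin\angle(Rx,V_ky)=\epsilon$. By the triangle inequality for angles,
\[
\sin\angle(V_k\tilde w,Rx)\leqslant \sin\angle(Rx,V_ky)+\sin\angle(V_ky,V_k\tilde w)=\epsilon+\sin\angle(y,\tilde w).
\]
The second equality uses the orthonormality of $V_k$. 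It therefore remains to show that $\sin\angle(y,\tilde w)\leqslant \|H\|\epsilon/\bigl(\sqrt{1-\epsilon^2}\,(\operatorname{sep}(\tilde c^2-\tilde s^2,C_k)-|\tilde c^2-\tilde s^2-(c^2-s^2)|)\bigr)$.

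For this step I work entirely in $\mathbb{R}^k$ with $G:=B_k^{\mathrm T}B_k-\bar B_k^{\mathrm T}\bar B_k=V_k^{\mathrm T}HV_k$. Relation \eqref{tmp202502031044} gives the residual bound $\|Gy-(c^2-s^2)y\|\leqslant\delta=\epsilon\|H\|/\sqrt{1-\epsilon^2}$. Write $y=\cos\theta\,\tilde w+\sin\theta\,\widetilde W_\perp z$ with $\|z\|=1$, where $\theta=\angle(y,\tilde w)$. Since $G\tilde w=(\tilde c^2-\tilde s^2)\tilde w$ and $G$ is symmetric, $\widetilde W_\perp^{\mathrm T}G\widetilde W_\perp=C_k$ and $\widetilde W_\perp^{\mathrm T}G\tilde w=0$. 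Multiplying the residual by $\widetilde W_\perp^{\mathrm T}$ therefore gives
\[
\sin\theta\,\|(C_k-(c^2-s^2)I)z\|\leqslant \delta .
\]

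Finally, I bound $\sigma_{\min}(C_k-(c^2-s^2)I)$ from below by the definition of sep and Weyl's inequality for the scalar shift:
\[
\sigma_{\min}(C_k-(c^2-s^2)I)\geqslant \operatorname{sep}(\tilde c^2-\tilde s^2,C_k)-|\tilde c^2-\tilde s^2-(c^2-s^2)|>0 .
\]
Positivity comes from the hypothesis. Dividing yields the required bound on $\sin\theta$, and substituting into the triangle inequality gives the theorem.

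The only delicate point is the first step: justifying the angle triangle inequality and its form. The cleaner route is to pass to the orthogonal decomposition. Write $Rx=\sqrt{1-\epsilon^2}\,V_ky+\epsilon f$ with $f\perp\operatorname{span}(V_k)$. Then $\sin\angle(Rx,V_k\tilde w)=\|(I-V_k\tilde w\tilde w^{\mathrm T}V_k^{\mathrm T})Rx\|\leqslant\sqrt{1-\epsilon^2}\sin\theta+\epsilon\leqslant\sin\theta+\epsilon$, which is exactly the combination above.
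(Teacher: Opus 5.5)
Your proof is correct and takes essentially the same route as the paper. The paper gives no argument of its own: it transfers the problem to the symmetric eigenproblem for $H$ and invokes Theorem~5.1 of Jia--Stewart. You have written that argument out explicitly in the present notation: the residual bound \eqref{tmp202502031044} for the normalized projection $y$, the estimate $\sigma_{\min}(C_k-(c^2-s^2)I)\geqslant\operatorname{sep}(\tilde c^2-\tilde s^2,C_k)-|\tilde c^2-\tilde s^2-(c^2-s^2)|$ from the scalar-shift Weyl inequality, and the orthogonal splitting $Rx=\sqrt{1-\epsilon^2}\,V_ky+\epsilon f$, which gives $\sin\angle(\tilde x,x)_M\leqslant\epsilon+\sin\theta$.
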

Thus, a sufficient condition for $\tilde x$ to converge requires that $\operatorname{sep}(\tilde c^2-\tilde s^2,Z_\perp)$ be uniformly bounded from below by a positive constant.

However, the assumption that $\frac cs$ is simple does {\em not}
ensure that $\tilde c^2-\tilde s^2$ is
well separated from the spectrum of $C_k$, i.e., the other Ritz values than. According to the analysis in
\cite{Jia2004,JiaandStewart2001},
for a general projection subspace ${\rm span}(V_k)$, it is possible that
$\operatorname{sep}\left(\tilde{c}^2-\tilde{s}^2, C_k\right)=0$ or arbitrarily small. Some examples
are constructed to illustrate such phenomena in \cite{Jia2004}.
In the first case, $\tilde x$ either may be not unique and there are {\em more than one} Ritz vectors
that approximate the same right generalized singular vector $x$, leading to
the failure of the method; in the second case, $\tilde x$,
though unique, may have very poor accuracy or even has no accuracy.

\subsection{Convergence of the left Ritz vectors}\label{subsec:3.3}

Regarding the convergence of the left Ritz vectors, the authors in \cite{IRJBD} have
proved that if $c$ or $s$ is $0$,
which corresponds to the trivial zero or infinite
generalized singular value, the left Ritz vector
$\tilde p^A$ or $\tilde p^L$ does {\em not} converge. For a nontrivial finite generalized singular value $\frac c s$, Theorem 4.2 in \cite{JiaandZhang2023FEASTGSVD} establishes the error bounds
\begin{equation}
\label{error of left generalized singular vectors}
\begin{aligned}
& \sin \angle\left(\tilde p^A, p^A\right) \leqslant \frac{\left\|Q_A\right\|}{\tilde c} \sin \angle\left(\tilde x, x\right)_M, \\
& \sin \angle\left(\tilde p^L, p^L\right) \leqslant \frac{\left\|Q_L\right\|}{\tilde s} \sin \angle\left(\tilde x, x\right)_M.
\end{aligned}
\end{equation}
Since $\|Q_A\|\leqslant 1$ and $\|Q_L\|\leqslant 1$, the above two bounds indicate
that the left Ritz vectors
$\tilde p^A$ and $\tilde p^L$ converge if $\tilde x$ does
since $\tilde c\rightarrow c$ and $\tilde s\rightarrow s$,
though the speed of convergence could be slow if $c$ or $s$ is small.

\section{The RJBD method}\label{sec:4}

We have shown that the right and left Ritz vectors obtained by the JBD method
may converge erratically and even may fail to converge, implying that the residual norms
may converge irregularly or may not converge to zero.
Such possible irregular convergence or non-convergence
affects the convergence of a restarted JBD
algorithm whose restarted subspaces are commonly constructed by the currently available Ritz vectors that
are used to approximate desired generalized singular vectors. Given the unconditional convergence of
Ritz values,
it is thus important and attractive to retain the Ritz values but seek new better
approximate generalized singular vectors to guarantee
their convergence as $\epsilon\rightarrow 0$, which can be used to construct better restarted
subspaces so that more accurate approximate GSVD components can be extracted from them.

It is known that the refined Rayleigh--Ritz projection method
\cite{book:BaiEigenvalue,book:StewartMatrixAlgorithms,book:VanDerVorst2002} for the eigenvalue
problem can overcome the possible irregular convergence and even non-convergence of
Ritz vectors. In this section, we extend this class of projection methods
to the GSVD problem.  Particularly,
based on the JBD process, we will propose the RJBD method that
computes mathematically different approximate
right and left generalized singular vectors, called the right and left refined Ritz vectors, and prove the
unconditional convergence of these new approximations as $\epsilon\rightarrow 0$.

Exploiting \eqref{BTB+barBTbarB=I} and \eqref{approximate generalized singular vectors from JBD},
we introduce the right and left refined Ritz vectors as follows.
\begin{definition}
\label{Def: refined vectors}
For a given Ritz value $\frac{\tilde c}{\tilde s}$, i.e., the generalized singular value of $\{B_k,\bar B_k\}$, the right refined Ritz vector is defined as $\hat x=H_k\hat w\in
{\rm span}(H_k)$ with $\hat{w}$ satisfying the
residual optimality
\begin{equation}
\label{definition of refined right singular vector}
\left\|\tilde s^2 Q_A^{\mathrm{T}} Q_A V_k \hat{w}-\tilde c^2 Q_L^{\mathrm{T}} Q_L V_k \hat{w}\right\|=\min _{\|w\|=1}\left\|\tilde s^2 Q_A^{\mathrm{T}} Q_A V_k w-\tilde c^2 Q_L^{\mathrm{T}} Q_L V_k w\right\|,
\end{equation}
and the left refined Ritz vectors associated with $A$ and $L$ are defined as
\begin{equation}
\label{definition of refined left singular vectors}
\hat p^A=\frac{A\hat x}{\|A\hat x\|}=\frac{U_{k+1}B_k\hat w}{\|B_k\hat w\|}=:U_{k+1}\hat q^A,\quad \hat p^L=\frac{L\hat x}{\|L\hat x\|}=\frac{\widehat{U}_k\bar B_k\hat w}{\|\bar B_k\hat w\|}=:\widehat U_k\hat q^L.
\end{equation}
\end{definition}

The following theorem shows what $\hat{w}$ is and how to  compute it efficiently and accurately.
\begin{theorem}
\label{Th: computation of refined right singular vector}
The vector $\hat{w}$ in \eqref{definition of refined right singular vector} is the right singular vector of $\left(\begin{array}{c}
\tilde s^2 B_k^{\mathrm{T}} B_k-\tilde c^2 \bar{B}_k^{\mathrm{T}} \bar{B}_k \\
\alpha_{k+1} \beta_{k+1} e_k^{\mathrm{T}}
\end{array}\right)\in \mathbb{R}^{(k+1)\times k}$ corresponding to its smallest singular value
$\sigma_{\min}(\cdot)$.
\end{theorem}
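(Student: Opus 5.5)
We would express the $n$-vector $\tilde s^2 Q_A^{\mathrm T}Q_AV_kw-\tilde c^2Q_L^{\mathrm T}Q_LV_kw$ in the orthonormal basis $(V_k,v_{k+1})$ supplied by the Lanczos relations \eqref{Lanczos bidiagonalization}. The minimization \eqref{definition of refined right singular vector} then becomes an equivalent small $(k+1)\times k$ problem, and the claim follows from the variational characterization of the smallest singular value.

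First we compute $Q_A^{\mathrm T}Q_AV_k$. From \eqref{Lanczos bidiagonalization}, $Q_AV_k=U_{k+1}B_k$ and $Q_A^{\mathrm T}U_{k+1}=V_kB_k^{\mathrm T}+\alpha_{k+1}v_{k+1}e_{k+1}^{\mathrm T}$. Hence
\[
Q_A^{\mathrm T}Q_AV_k=V_kB_k^{\mathrm T}B_k+\alpha_{k+1}v_{k+1}e_{k+1}^{\mathrm T}B_k=V_kB_k^{\mathrm T}B_k+\alpha_{k+1}\beta_{k+1}v_{k+1}e_k^{\mathrm T},
\]
since the last row of $B_k$ is $\beta_{k+1}e_k^{\mathrm T}$.

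Next we compute $Q_L^{\mathrm T}Q_LV_k$. The upper process gives $Q_L^{\mathrm T}Q_L\widehat V_k=\widehat V_k\widehat B_k^{\mathrm T}\widehat B_k+\hat\beta_k\hat v_{k+1}e_k^{\mathrm T}\widehat B_k$. By \eqref{relation of v and vhat} we have $\widehat V_k=V_kD_k$ and $\hat v_{k+1}=(-1)^kv_{k+1}$. Multiplying on the right by $D_k$ and using $\bar B_k=\widehat B_kD_k$ gives
\[
Q_L^{\mathrm T}Q_LV_k=V_k\bar B_k^{\mathrm T}\bar B_k+(-1)^k\hat\beta_k\hat\alpha_k(-1)^{k-1}v_{k+1}e_k^{\mathrm T}.
\]
The coefficient of $v_{k+1}e_k^{\mathrm T}$ here is $\pm\hat\alpha_k\hat\beta_k$. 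The key identity is that this coefficient is consistent with the $A$-part: since $Q_A^{\mathrm T}Q_A+Q_L^{\mathrm T}Q_L=I$, the $v_{k+1}$-components must cancel. Combined with $B_k^{\mathrm T}B_k+\bar B_k^{\mathrm T}\bar B_k=I_k$ from \eqref{BTB+barBTbarB=I}, this forces
\[
Q_L^{\mathrm T}Q_LV_k=V_k\bar B_k^{\mathrm T}\bar B_k-\alpha_{k+1}\beta_{k+1}v_{k+1}e_k^{\mathrm T}.
\]
We would prove it exactly this way, via $Q_L^{\mathrm T}Q_LV_k=V_k-Q_A^{\mathrm T}Q_AV_k$, which avoids sign bookkeeping entirely.

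Substituting both expressions and using $\tilde s^2+\tilde c^2=1$, we obtain
\[
\tilde s^2Q_A^{\mathrm T}Q_AV_kw-\tilde c^2Q_L^{\mathrm T}Q_LV_kw=(V_k,v_{k+1})\begin{pmatrix}\tilde s^2B_k^{\mathrm T}B_k-\tilde c^2\bar B_k^{\mathrm T}\bar B_k\\ \alpha_{k+1}\beta_{k+1}e_k^{\mathrm T}\end{pmatrix}w.
\]
Because $(V_k,v_{k+1})$ has orthonormal columns, the norm of the left-hand side equals the norm of the small matrix applied to $w$. Minimizing over unit $w$ gives $\sigma_{\min}$ of that $(k+1)\times k$ matrix, attained at its corresponding right singular vector, which is therefore $\hat w$.

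The only delicate point is the $Q_L$ tail term with its alternating signs. Routing through $Q_L^{\mathrm T}Q_L=I-Q_A^{\mathrm T}Q_A$, which follows from $Q^{\mathrm T}Q=I$, removes that difficulty, so we expect no real obstacle.
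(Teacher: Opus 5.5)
Your proof is correct and follows essentially the paper's route: write $\tilde s^2Q_A^{\mathrm T}Q_AV_kw-\tilde c^2Q_L^{\mathrm T}Q_LV_kw$ in the orthonormal basis $(V_k,v_{k+1})$ via the Lanczos relations, then apply the variational characterization of $\sigma_{\min}$ to the resulting $(k+1)\times k$ matrix. The paper handles the $Q_L$ tail term differently. It reads the term off the upper bidiagonalization as $-\hat\alpha_k\hat\beta_k v_{k+1}e_k^{\mathrm T}$ (your unresolved $\pm$ is indeed a minus) and combines it with the $A$-part using $\hat\alpha_k\hat\beta_k=\alpha_{k+1}\beta_{k+1}$. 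You instead use $Q_L^{\mathrm T}Q_L=I-Q_A^{\mathrm T}Q_A$ together with $\bar B_k^{\mathrm T}\bar B_k=I_k-B_k^{\mathrm T}B_k$, which gives the same term without that sign bookkeeping or the coupling identity.
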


\begin{proof}
Exploiting \eqref{Lanczos bidiagonalization} and the third relation in \eqref{JBD},
we have
$$
\begin{aligned}
&\left\|\tilde s^2 Q_A^{\mathrm{T}} Q_A V_k \hat{w}-\tilde c^2 Q_L^{\mathrm{T}} Q_L V_k \hat{w}\right\| \\
&=\min _{\|w\|=1}\left\|\tilde s^2 Q_A^{\mathrm{T}} Q_A V_k w-\tilde c^2 Q_i^{\mathrm{T}} Q_L V_k w\right\| \\&=\min _{\|w\|=1}\left\|\tilde s^2\left(V_k B_k^{\mathrm{T}} B_k w+\alpha_{k+1} \beta_{k+1} v_{k+1} e_k^{\mathrm{T}} w\right)-\tilde c^2\left(V_k \bar{B}_k^{\mathrm{T}} \bar{B}_k w-\hat{\alpha}_k \hat{\beta}_k v_{k+1} e_k^{\mathrm{T}} w\right)\right\| \\
&=\min _{\|w\|=1}\left\|\left(\tilde s^2 V_k B_k^{\mathrm{T}} B_k-\tilde c^2 V_k \bar{B}_k^{\mathrm{T}} \bar{B}_k+\alpha_{k+1} \beta_{k+1} v_{k+1} e_k^{\mathrm{T}}\right) w\right\| \\
&=\min _{\|w\|=1}\left\|\left(\begin{array}{c}
\tilde s^2 B_k^{\mathrm{T}} B_k-\tilde c^2 \bar{B}_k^{\mathrm{T}} \bar{B}_k \\
\alpha_{k+1} \beta_{k+1} e_k^{\mathrm{T}}
\end{array}\right) w\right\| \\
&=\sigma_{\min }\left(\begin{array}{c}
\tilde s^2 B_k^{\mathrm{T}} B_k-\tilde c^2 \bar{B}_k^{\mathrm{T}} \bar{B}_k \\
\alpha_{k+1} \beta_{k+1} e_k^{\mathrm{T}}
\end{array}\right).
\end{aligned}
$$
This completes the proof.
\end{proof}

This theorem indicates that we can obtain $\hat w$ efficiently and accurately by
computing the SVD of the small $(k+1)\times k$ tridiagonal matrix $\left(\begin{array}{c}
\tilde s^2 B_k^{\mathrm{T}} B_k-\tilde c^2 \bar{B}_k^{\mathrm{T}} \bar{B}_k \\
\alpha_{k+1} \beta_{k+1} e_k^{\mathrm{T}}
\end{array}\right)$.

Once $\hat w$ and $\hat x$ are available, we compute $\hat p^A$ and $\hat p^L$ by (\ref{definition of refined left singular vectors}). Recall that $\tilde{c}=\|A\tilde{x}\|$ and $\tilde{s}=\|L\tilde{x}\|$.
From \eqref{definition of refined left singular vectors} we can
obtain a more accurate appoximate generalized singular value $\hat{c}/\hat{s}$,
called the refined Ritz value, with
\begin{equation}
\label{definition of new approximate generalized singular values}
\hat c=\|A\hat x\|=\|B_k\hat w\|, \quad \hat s=\|L\hat x\|=\|\bar B_k\hat w\|,
\end{equation}
which satisfy $\hat c^2+\hat s^2=1$ and are the solution of the residual minimization problem
$$
\min_{\eta^2+\gamma^2=1} \left\|\eta^2 Q_A^{\mathrm{T}} Q_A V_k\hat w-\gamma^2 Q_L^{\mathrm{T}} Q_L V_k \hat w\right\|.
$$
Such refined Ritz value guarantees that the upper two parts of the corresponding residual of
$\{\hat{c},\hat{s},\hat{x},\hat{p}^A,\hat{p}^L\}$ as defined by \eqref{residual} equal $0$, i.e., $A\hat x=\hat
c\hat p^A$ and $L\hat x=\hat s\hat p^L$. We call $\{\hat c,\hat s,\hat x,\hat p^A,\hat p^L\}$ a refined Ritz
approximation to the desired GSVD component $\{c,s,x,p^A,p^L\}$ of $\{A,L\}$.

Below we establish a compact upper bound on the norm of
residual $r(\hat c,\hat s,\hat{x},\hat p^A,\hat p^L)$, which is used to judge the convergence of $\{\hat
c,\hat s,\hat{x},\hat p^A,\hat p^L\}$ very cheaply without explicitly computing
the right and left Ritz vectors at expensive cost.

\begin{theorem}
\label{Th: residual bound of a refined generalized singular component}
The norm of residual of a refined Ritz approximation $\{\hat c,\hat s,\hat x,\hat p^A,\hat p^L\}$
defined by \eqref{residual} satisfies
\begin{equation}
\label{refined residual bound}
\left\|r\left(\hat c, \hat s, \hat x, \hat p^A, \hat p^L\right)\right\|\leqslant\frac{\|R\|}{\hat c\hat s}\sqrt{\left\|\hat s^2 B_k^{\mathrm{T}} B_k \hat w-\hat c^2 \bar{B}_k^{\mathrm{T}} \bar{B}_k \hat w\right\|^2+\alpha_{k+1}^2 \beta_{k+1}^2\left(e_k^{\rm T} \hat w\right)^2}.
\end{equation}
\end{theorem}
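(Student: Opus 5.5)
Since $A\hat x=\hat c\hat p^A$ and $L\hat x=\hat s\hat p^L$ by \eqref{definition of refined left singular vectors} and \eqref{definition of new approximate generalized singular values}, the first two blocks of the residual \eqref{residual} vanish. It therefore suffices to bound the third block, $\hat s A^{\mathrm T}\hat p^A-\hat c L^{\mathrm T}\hat p^L$. We substitute $\hat p^A=A\hat x/\hat c$ and $\hat p^L=L\hat x/\hat s$ to rewrite it as
$$
\hat s A^{\mathrm T}\hat p^A-\hat c L^{\mathrm T}\hat p^L=\frac{1}{\hat c\hat s}\left(\hat s^2A^{\mathrm T}A\hat x-\hat c^2L^{\mathrm T}L\hat x\right).
$$
This is where the factor $1/(\hat c\hat s)$ comes from.

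Next we pass to the orthonormal factors. Using $A=Q_AR$, $L=Q_LR$ and $\hat x=H_k\hat w=R^{-1}V_k\hat w$, we obtain
$$
\hat s^2A^{\mathrm T}A\hat x-\hat c^2L^{\mathrm T}L\hat x=R^{\mathrm T}\left(\hat s^2Q_A^{\mathrm T}Q_AV_k\hat w-\hat c^2Q_L^{\mathrm T}Q_LV_k\hat w\right).
$$
Its norm is therefore at most $\|R\|$ times the norm of the vector in parentheses, using $\|R^{\mathrm T}\|=\|R\|$.

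To evaluate the vector in parentheses, we use the Lanczos relations \eqref{Lanczos bidiagonalization} together with \eqref{relation of v and vhat} and $\bar B_k=\widehat B_kD_k$, exactly as in the proof of \cref{Th: computation of refined right singular vector}, but with $\tilde c,\tilde s$ replaced by $\hat c,\hat s$. Concretely, $Q_A^{\mathrm T}Q_AV_k=V_kB_k^{\mathrm T}B_k+\alpha_{k+1}\beta_{k+1}v_{k+1}e_k^{\mathrm T}$ and $Q_L^{\mathrm T}Q_LV_k=V_k\bar B_k^{\mathrm T}\bar B_k-\hat\alpha_k\hat\beta_kv_{k+1}e_k^{\mathrm T}$, where $\hat\alpha_k\hat\beta_k$ is defined through \Cref{alg:JBD}. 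The main point needing care is the $v_{k+1}$ coefficient: $\hat s^2\alpha_{k+1}\beta_{k+1}+\hat c^2\hat\alpha_k\hat\beta_k$ must collapse to $\alpha_{k+1}\beta_{k+1}$. This follows from $\hat\alpha_k\hat\beta_k=\alpha_{k+1}\beta_{k+1}$ (the identity \eqref{relation of v and vhat} extended to $i=k$, which holds by step 6 of \Cref{alg:JBD}) and from $\hat c^2+\hat s^2=1$. The same collapse is already implicit in \cref{Th: computation of refined right singular vector}. Then
$$
\hat s^2Q_A^{\mathrm T}Q_AV_k\hat w-\hat c^2Q_L^{\mathrm T}Q_LV_k\hat w=V_k\left(\hat s^2B_k^{\mathrm T}B_k-\hat c^2\bar B_k^{\mathrm T}\bar B_k\right)\hat w+\alpha_{k+1}\beta_{k+1}(e_k^{\mathrm T}\hat w)v_{k+1}.
$$

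Finally, $(V_k,v_{k+1})$ is column orthonormal, so the squared norm of the last vector is the sum of the squared norms of the two components. Taking the square root and combining with the factors $\|R\|$ and $1/(\hat c\hat s)$ gives \eqref{refined residual bound}. The only real obstacle is the $v_{k+1}$ coefficient bookkeeping, including the sign from $D_k$ and $\hat v_{k+1}=(-1)^kv_{k+1}$. We handle it by reusing the computation in the proof of \cref{Th: computation of refined right singular vector} verbatim.
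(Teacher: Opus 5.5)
Your proof is correct and is essentially the paper's argument. The paper works with $A^{\mathrm T}U_{k+1}\hat q^A$ and $L^{\mathrm T}\widehat U_k\hat q^L$ via the Lanczos relations and collects the $v_{k+1}$ coefficient as $(\hat s/\hat c+\hat c/\hat s)\alpha_{k+1}\beta_{k+1}=\alpha_{k+1}\beta_{k+1}/(\hat c\hat s)$, which is literally your computation because $U_{k+1}B_k\hat w=Q_AV_k\hat w$ and $\widehat U_k\bar B_k\hat w=Q_LV_k\hat w$; your bookkeeping of $\hat\alpha_k\hat\beta_k=\alpha_{k+1}\beta_{k+1}$ (step 6 of \Cref{alg:JBD}), the sign from $D_k$, and $\hat c^2+\hat s^2=1$ is exactly what is needed.
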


\begin{proof}
Since $A\hat x=\hat c\hat p^A$ and $L\hat x=\hat s\hat p^L$ in the residual $r\left(\hat c, \hat s, \hat x, \hat p^A, \hat p^L\right)$, we have
$$
\left\|r\left(\hat c, \hat s, \hat x, \hat p^A, \hat p^L\right)\right\|
= \|\hat s A^{\mathrm{T}}
\hat p^A-\hat c L^{\mathrm{T}}p^L\| =\|\hat s A^{\mathrm{T}}U_{k+1}
\hat q^A-\hat c L^{\mathrm{T}}\hat{U}_k q^L\|.
$$
From \eqref{Lanczos bidiagonalization}, $h_{k+1}=R^{-1}v_{k+1}$, and $H_k=R^{-1} V_k$, we obtain
\begin{eqnarray*}
A^{\mathrm T}U_{k+1} & =&R^{\mathrm T}Q_A^{\mathrm T}U_{k+1} 
=R^{\mathrm T}\left(V_k B_k^{\mathrm T}+\alpha_{k+1} v_{k+1} e_{k+1}^{\mathrm T}\right)\nonumber \\
& =&R^{\mathrm{T}}R\left(H_k B_k^{\mathrm{T}}+\alpha_{k+1} h_{k+1} e_{k+1}^{\mathrm{T}}\right), \\
L^{\mathrm T}\widehat U_{k} & =&R^{\mathrm T}Q_L^{\mathrm T}\widehat U_{k}
=R^{\mathrm T}\left(\widehat{V}_k \widehat{B}_k^{\mathrm T}+\hat{\beta}_k \hat{v}_{k+1} e_k^{\mathrm T}\right) \nonumber \\
& =&R^{\mathrm{T}}R\left(H_k \bar B_k^{\mathrm{T}}+\bar\beta_{k} h_{k+1}
e_{k}^{\mathrm{T}}\right).\nonumber
\end{eqnarray*}
Thus,
$$
\begin{aligned}
&\left\|r\left(\hat c, \hat s, \hat x, \hat p^A, \hat p^L\right)\right\| \\
&=\left\|R^{\mathrm{T}}\left(\hat s Q_A^{\mathrm{T}} U_{k+1} \hat q^A-\hat c Q_L^{\mathrm{T}} \widehat{U}_k \hat q^L\right)\right\| \\
&=\left\|R^{\mathrm{T}}\left[\frac{\hat s}{\hat c}\left(V_k B_k^{\mathrm{T}} B_k \hat w+\alpha_{k+1} v_{k+1} e_{k+1}^{\mathrm{T}} B_k \hat w\right)-\frac{\hat c}{\hat s}\left(V_k \bar{B}_k^{\mathrm{T}} \bar{B}_k \hat w+\hat{\beta}_k \hat{v}_{k+1} e_k^{\mathrm{T}} \bar{B}_k \hat w\right)\right]\right\| \\
&=\left\|R^{\mathrm{T}}\left[\frac{\hat s}{\hat c} V_k B_k^{\mathrm{T}} B_k \hat w-\frac{\hat c}{\hat s} V_k \bar{B}_k^{\mathrm{T}} \bar{B}_k \hat w+\left(\frac{\hat s}{\hat c}+\frac{\hat c}{\hat s}\right) \alpha_{k+1} \beta_{k+1} v_{k+1} e_k^{\mathrm{T}} \hat w\right]\right\| \\
&\leqslant\frac{\|R\|}{\hat c\hat s}\sqrt{\left\|\hat s^2 B_k^{\mathrm{T}} B_k \hat w-\hat c^2 \bar{B}_k^{\mathrm{T}} \bar{B}_k \hat w\right\|^2+\alpha_{k+1}^2 \beta_{k+1}^2\left(e_k^{\rm T} \hat w\right)^2},
\end{aligned}
$$
where we used $e_{k+1}^{\rm T}B_k\hat w=\beta_{k+1}e_k^{\rm T}\hat w$,
which completes the proof.
\end{proof}

From \eqref{refined residual bound} and \cite{IRJBD},
we get the relative residual norm
\begin{equation}\label{relativeres}
\frac{1}{\|R\|}\left\|r\left(\hat c, \hat s, \hat x, \hat p^A, \hat p^L\right)\right\|\leqslant\frac{1}{\hat c\hat s}\sqrt{\left\|\hat s^2 B_k^{\mathrm{T}} B_k \hat w-\hat c^2 \bar{B}_k^{\mathrm{T}} \bar{B}_k \hat w\right\|^2+\alpha_{k+1}^2 \beta_{k+1}^2\left(e_k^{\rm T} \hat w\right)^2}.
\end{equation}
We claim the refined Ritz approximation
$\{\hat c,\hat s,\hat x,\hat p^A,\hat p^L\}$ converged if
\begin{equation}
\label{stopping criterion for refined}
\frac{1}{\hat c\hat s}\sqrt{\left\|\hat s^2 B_k^{\mathrm{T}} B_k \hat w-\hat c^2 \bar{B}_k^{\mathrm{T}} \bar{B}_k \hat w\right\|^2+\alpha_{k+1}^2 \beta_{k+1}^2\left(e_k^{\rm T} \hat w\right)^2}\leqslant tol,
\end{equation}
where $tol$ is a user-prescribed stopping tolerance. With this stopping criterion
we do not need to explicitly compute right and left
refined Ritz vectors $\hat x$, $\hat p^A$ and $\hat p^L$ before they converge, thereby saving considerable
computational cost without solving the large linear system \eqref{calculate x_i by lsqr} for $\hat{x}$, in
which $\tilde{w}_i$ is replaced by $\hat{w}$.
In \cite{IRJBD}, the authors have established a similar bound for the residual norm of the Ritz approximation and used it to design an efficient stopping criterion without explicitly computing the left and right Ritz vectors until convergence.

\begin{remark}\label{finiteprecision}
In finite precision arithmetic, 
the authors \cite{IRJBD} have
shown that the reliability of the stopping criterion \eqref{stopping criterion for refined} depends
on the size of $\|\underline{B}_k^{-1}\|\|\widehat B_k^{-1}\|$,
where $\underline{B}_k$ is the $k\times k$ leading submatrix of $B_k$. For the RJBD method,
there is an extra term $O(\|\underline{B}_k^{-1}\|\|\widehat B_k^{-1}\|\epsilon_{\rm mach})$
in the right-hand side of \eqref{relativeres}, where $\epsilon_{\rm mach}$ is machine precision.
When $\|\underline{B}_k^{-1}\|\|\widehat
B_k^{-1}\|$ is large, the true residual norms may be larger than the residual norm bound in \eqref{refined
residual bound}, causing that the stopping criterion (\ref{stopping criterion for refined}) is not reliable.
It is proved in \cite{IRJBD,Jia-JBD-finite-precision} that $\|\underline{B}_k^{-1}\|\|\widehat
B_k^{-1}\|$ is uniformly bounded for flat or square $A$ of row full rank and tall or square $L$ of column
full rank
but  tall $A$'s and flat $L$'s may lead to
large $\|\underline{B}_k^{-1}\|$ and $\|\widehat B_k^{-1}\|$, respectively.
Therefore, the stopping criterion is reliable for flat or square $A$ of full row rank
and tall or square $L$ of full column rank but
may not be reliable for $A$ tall and $L$ flat.
\end{remark}

\begin{remark}\label{lsaccuracy}
If the LS problem \eqref{least-squares} is approximately solved by LSQR with the stopping
criterion that requires that the relative residual norm of normal equation of \eqref{least-squares}
drop below a user-prescribed tolerance $lsqrtol$, there is an extra
term $2\sqrt{k+1}\kappa([A;L])\cdot lsqrtol$ in the right-hand side
of \eqref{relativeres},
where $\kappa([A;L])$ is the condition number of the stacked matrix $(A^{\rm T},L^{\rm T})^{\rm T}$.
Together with \cref{finiteprecision}, this indicates
that, supposing that the size of $\|\underline{B}_k^{-1}\|\|\widehat
B_k^{-1}\|$ is modest, the ultimately attainable residual norm by RJBD is
controlled by the stopping tolerance
$lsqrtol$ of LSQR and the condition number $\kappa([A;L])$; see Theorem 3.12 of \cite{IRJBD} and a remark
on it for more details.
\end{remark}

Jia and Stewart \cite{JiaandStewart2001} establish an error bound on the refined Ritz vectors for eigenvalue problems. Exploiting $Q_A^{\mathrm T}Q_A+Q_L^{\mathrm T}Q_L=I_n$ and $\tilde c^2+\tilde s^2=1$, it is straightforward to verify that (\ref{definition of refined right singular vector}) is equivalent to
$$
\left\|HV_k\hat w-\left(\tilde c^2-\tilde s^2\right)V_k\hat w\right\|=\min_{\left\|w\right\|=1}\left\|HV_kw-\left(\tilde c^2-\tilde s^2\right)V_kw\right\|.
$$
Thus, a direct application of Theorem 7.1 in \cite{JiaandStewart2001} leads to the following result.

\begin{theorem}
\label{Th: error of refined right generalized singular vector}
Let $H_{\perp}$ be defined by \eqref{definition of Y_perp}, and assume that
\begin{equation}
\label{condition of Th: error of refined right generalized singular vector}
\operatorname{sep}\left(\tilde c^2-\tilde s^2,H_\perp\right)\geqslant\operatorname{sep}\left(c^2-s^2,H_\perp\right)-\left|\tilde c^2-\tilde s^2-\left(c^2-s^2\right)\right|>0.
\end{equation}
Then
\begin{equation}
\label{tmp202509162215}
\sin\angle\left(\hat x,x\right)_M\leqslant\frac{\left\|H-\left(\tilde c^2-\tilde s^2\right)I\right\|\epsilon+\left|\tilde c^2-\tilde s^2-\left(c^2-s^2\right)\right|}{\sqrt{1-\epsilon^2}\left(\operatorname{sep}\left(c^2-s^2,H_\perp\right)-\left|\tilde c^2-\tilde s^2-\left(c^2-s^2\right)\right|\right)}.
\end{equation}
\end{theorem}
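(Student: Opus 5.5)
We transfer everything to the symmetric eigenproblem of $H$ and redo the Jia--Stewart refined Ritz vector argument there. Write $\tilde\mu=\tilde c^2-\tilde s^2$ and $\mu=c^2-s^2$, and set $y=Rx$ (a unit vector, since $RX=W$ is orthogonal) and $\hat y=R\hat x=V_k\hat w$, also of unit length. Then $\sin\angle(\hat x,x)_M=\sin\angle(\hat y,y)$. As noted just before the statement, by $Q_A^{\rm T}Q_A+Q_L^{\rm T}Q_L=I_n$, $\hat w$ minimizes $\|(H-\tilde\mu I)V_kw\|$ over unit vectors $w$. The proof has three steps: an upper bound on the optimal residual, a lower bound on the residual in terms of the angle, and a comparison of the two.

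\textbf{Upper bound on the minimal residual.} Decompose $y=P_Vy+(I-P_V)y$, where $P_V=V_kV_k^{\rm T}$, $\|P_Vy\|=\sqrt{1-\epsilon^2}$ and $\|(I-P_V)y\|=\epsilon$. Take the test vector $w_0=V_k^{\rm T}y/\sqrt{1-\epsilon^2}$, so that $V_kw_0=(y-(I-P_V)y)/\sqrt{1-\epsilon^2}$. Using $Hy=\mu y$ we get
$$(H-\tilde\mu I)V_kw_0=\frac{(\mu-\tilde\mu)y-(H-\tilde\mu I)(I-P_V)y}{\sqrt{1-\epsilon^2}}.$$
By optimality of $\hat w$ this gives
$$\|(H-\tilde\mu I)\hat y\|\le\frac{|\tilde\mu-\mu|+\|H-\tilde\mu I\|\epsilon}{\sqrt{1-\epsilon^2}}.$$

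\textbf{Lower bound in terms of the angle.} Expand $\hat y=\cos\theta\, y+\sin\theta\, RX_\perp z$ with $\|z\|=1$ and $\theta=\angle(\hat y,y)$. Since $(Rx,RX_\perp)$ is orthogonal and diagonalizes $H$ into $\operatorname{diag}(\mu,H_\perp)$, we have
$$(H-\tilde\mu I)\hat y=\cos\theta(\mu-\tilde\mu)y+\sin\theta\, RX_\perp(H_\perp-\tilde\mu I)z.$$
The two terms are orthogonal, so
$$\|(H-\tilde\mu I)\hat y\|\ge\sin\theta\,\|(H_\perp-\tilde\mu I)z\|\ge\sin\theta\,\operatorname{sep}(\tilde\mu,H_\perp).$$

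\textbf{Combining the bounds.} Since $H_\perp$ is symmetric, Weyl's inequality gives $\operatorname{sep}(\tilde\mu,H_\perp)\ge\operatorname{sep}(\mu,H_\perp)-|\tilde\mu-\mu|$. This is the first inequality in \eqref{condition of Th: error of refined right generalized singular vector}, and the hypothesis makes the right-hand side positive. Dividing the upper bound by this positive quantity yields \eqref{tmp202509162215}.

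The only delicate point is the orthogonal splitting in the lower bound. It must use the exact eigenvector $y$ of $H$ (so that the cross term vanishes), not the Ritz direction. Once that is done, the rest is routine.
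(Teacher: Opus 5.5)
Your argument is correct and is essentially the paper's proof. The paper rewrites the refined optimality as minimizing $\|(H-(\tilde c^2-\tilde s^2)I)V_kw\|$ over unit $w$ and then invokes Theorem 7.1 of Jia and Stewart, whose proof is exactly your two bounds: the test-vector upper bound with $w_0=V_k^{\mathrm T}Rx/\sqrt{1-\epsilon^2}$, and the lower bound $\sin\angle(\hat x,x)_M\,\operatorname{sep}(\tilde c^2-\tilde s^2,H_\perp)$ obtained by splitting $R\hat x$ along $Rx$ and $\operatorname{span}(RX_\perp)$. You are also right that the first inequality in the hypothesis holds automatically, and the vanishing cross term is not actually needed, since projecting onto $\operatorname{span}(RX_\perp)$ alone already gives the lower bound.
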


\Cref{th: eigenvalue of BkTBk-barBkTbarBk} has shown that $\left|\tilde c^2-\tilde
s^2-\left(c^2-s^2\right)\right|\rightarrow 0$ as $\epsilon\rightarrow0$. Since ${\rm sep}(c^2-s^2,
H_{\perp})$ is a positive constant, the denominator in (\ref{tmp202509162215}) is uniformly positive from
below as $\epsilon\rightarrow 0$. As a result, the condition (\ref{condition of Th: error of refined right
generalized singular vector}) is guaranteed, and \cref{Th: error of refined right generalized singular vector}
shows that we must have $\hat x\rightarrow x$ unconditionally as $\epsilon\rightarrow 0$,
which is unlike the right Ritz vector $\tilde x$ whose convergence is {\em conditional} and may fail
when $\epsilon\rightarrow 0$.

Regarding the convergence of the left refined Ritz vectors, for a nontrivial generalized singular value $c/s$, by making use of  $A\hat x=\hat
c\hat p^A$ and $L\hat x=\hat s\hat p^L$ as in the JBD method, we can obtain an analog of
\eqref{error of left generalized singular vectors}:
$$
\begin{aligned}
& \sin \angle\left(\hat p^A, p^A\right) \leqslant \frac{\left\|Q_A\right\|}{\hat c} \sin\angle\left(\hat x,x\right)_M, \\
& \sin \angle\left(\hat p^L, p^L\right) \leqslant \frac{\left\|Q_L\right\|}{\hat s} \sin\angle\left(\hat x,x\right)_M,
\end{aligned}
$$
which indicate that $\hat p^A$ and $\hat p^L$ converge since $\hat x$ converges.
Unfortunately, for zero or infinite generalized singular vectors, i.e., $c=0$ or $s=0$,
they fail to converge since $\hat c$ or $\hat
s$ tends to zero. This is in accordance with Theorem 3.1 in \cite{IRJBD}, because the deviation of the
left generalized singular vector $p^A$ or $p^L$ from the subspace $\operatorname{span}(U_{k+1})$ or
$\operatorname{span}(\widehat U_k)$ is generally a positive constant and does {\em not} tend to zero.

\section{Implicit restarting of the RJBD method and selection of the shifts}\label{sec:5}

As the subspace dimension $k$ increases, the JBD and RJBD methods become impractical due to excessive
computational cost and/or storage. Therefore, one must limit the maximum subspace dimension
$k_{\max}$ to perform the
methods. If they do not yet converge, restarting is necessary. The basic idea of restart is to choose a new
initial vector based on the information currently available and construct better left and right subspaces,
from which the JBD and RJBD methods obtain better approximations to the desired GSVD components.
One proceeds in such a way until convergence. The authors in \cite{IRJBD} have nontrivially adapted
the implicit restart
technique \cite{implicit-restarted-Arnoldi} to the JBD process, and developed an implicitly restarted JBD
(IRJBD) algorithm. In an implicitly restarted algorithm,
one must select certain shifts properly, which is crucial to the success and overall
efficiency of the algorithm, as we have shown in \cite{IRJBD}.

We briefly review the machansim of implicit restarting the JBD method \cite{IRJBD}, which applies
to the RJBD method as well. Suppose that the $l$ largest or smallest GSVD components are of interest. Given
$k-l$ pairs of nonnegative
shifts $\{\lambda_i,\mu_i\}_{i=1}^{k-l}$ satisfying $\lambda_i^2+\mu_i^2=1$, implicit
restarting implements $k-l$ step implicit QR iterations on $B_k$ and $\bar{B}_k$ with the $k-l$
shifts, and obtains an updated new $l$-step JBD process
\begin{equation}
\label{l-step restarted JBD}
    \begin{aligned}
    & \left(I_m, 0_{m,p}\right) V_l^{+\prime}=U_{l+1}^+ B_l^+, \\
    & Q Q^{\mathrm T}\left(\begin{array}{c}
    U_{l+1}^+ \\
    0_{p,l+1}
    \end{array}\right)=V_l^{+\prime} B_l^{+\mathrm T}+r_l^{+\prime} e_{l+1}^{\mathrm T}, \\
    & \left(0_{p,m}, I_p\right) V_l^{+\prime} D_l=\widehat{U}_l^+ \bar{B}_l^+
    \end{aligned}
\end{equation}
with $D_l=\operatorname{diag}(1,-1,\dots,(-1)^{l-1})\in\mathbb{R}^{l\times l}$,
and
\begin{equation}
\begin{aligned}
&AH_l^+=U_{l+1}^+B_l^+,\quad LH_l^+=\widehat U_l^+\bar B_l^+,\\
&B_l^{+\mathrm T}B_l^++\bar B_l^{+\mathrm T}\bar B_l^+=I_l,
\end{aligned}
\end{equation}
where the updated starting vectors
\begin{equation}
\label{start vectors after restart}
\begin{aligned}
&\tau u_1^+=\left(Q_A Q_A^{\mathrm T}-\lambda_1^2I_m\right)\cdots\left(Q_AQ_A^{\mathrm T}-\lambda_{k-l}^2I_m\right)u_1,\\
&\rho v_1^+=\left(Q_A^{\mathrm T}Q_A-\lambda_1^2I_n\right)\cdots\left(Q_A^{\mathrm T}Q_A-\lambda_{k-l}^2I_n\right)v_1\\
&\ \ \ \ \ =\left(Q_L^{\mathrm T}Q_L-\mu_1^2I_n\right)\cdots\left(Q_L^{\mathrm T}Q_L-\mu_{k-l}^2I_n\right)v_1,
\end{aligned}
\end{equation}
where $\tau$ and $\rho$ are normalizing factors. One then extends it to the $k$-step JBD process
in a standard way. For efficient and reliable implementations in finite precision arithmetic,
we refer to \cite{IRJBD} for details.

The selection of shifts is crucial for making a JBD-process-based algorithm efficient. On the basis of work in
\cite{Jia-poly-refined,Jia-IRRBL}, the authors \cite{IRJBD} have shown that the closer the shifts are to those
unwanted generalized singular values, the richer information on the desired generalized singular
vectors is contained in the resulting restarted subspaces. Based on this result,
the authors take the $k-l$ unwanted Ritz values as shifts, called the exact shifts,
which are the best available approximations obtained by the JBD method to some of those unwanted generalized singular values.


To get more insight into the exact shifts, let us make the orthogonal direct sum decompositions
\begin{equation}
\label{direct sum decompositions by Ritz vectors}
\begin{aligned}
\text{span}(U_{k+1})&=\text{span}(\tilde p_1^A,\dots,\tilde p_l^A)\oplus\text{span}(\tilde p_1^A,\dots,\tilde p_l^A)^{\perp},\\
\text{span}(\widehat U_k)&=\text{span}(\tilde p_1^L,\dots,\tilde p_l^L)\oplus\text{span}(\tilde p_1^L,\dots,\tilde p_l^L)^{\perp},\\
\text{span}(V_k)&=\text{span}(R\tilde x_1,\dots,R\tilde x_l)\oplus\text{span}(R\tilde x_1,\dots,R\tilde x_l)^{\perp}.
\end{aligned}
\end{equation}
It is straightforward to verify the following theorem, which is presented only for the computation of
the $l$ largest GSVD components, and the computation of the $l$ smallest GSVD components is similar
except the subscript changes in the quantities involved.

\begin{theorem}
In IRJBD, the $l$ largest Ritz values are the generalized singular values of
 $\{Q_A,Q_L\}$ with respect to the left
subspaces $\text{span}(\tilde p_1^A,\dots,\tilde p_l^A)$ and $\text{span}(\tilde p_1^L,\dots,\tilde p_l^L)$ and the right subspace $\text{span}(R\tilde x_1,\dots,R\tilde x_l)$, and
the exact shifts used are the generalized singular values of $\{Q_A,Q_L\}$ with respect to the left
subspaces $\text{span}(\tilde p_1^A,\dots,\tilde p_l^A)^{\perp}$ and $\text{span}(\tilde p_1^L,\dots,\tilde p_l^L)^{\perp}$, and the right subspace $\text{span}(R\tilde x_1,\dots,R\tilde x_l)^{\perp}$.
\end{theorem}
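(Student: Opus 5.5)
The plan is to read the statement as a decoupling result for the projected pair. We write everything in the orthonormal bases given by the CSD \eqref{GSVD of Bk and Bkbar} of $\{B_k,\bar B_k\}$ and check that the projected pair splits block-diagonally along the decompositions \eqref{direct sum decompositions by Ritz vectors}.

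First I would identify the bases involved. By \eqref{approximate generalized singular vectors from JBD} we have $R\tilde x_i=V_k\tilde w_i$, $\tilde p_i^A=U_{k+1}\tilde q_i^A$ and $\tilde p_i^L=\widehat U_k\tilde q_i^L$, where $\widetilde W_k$ and $\widetilde Q_k^L$ are orthogonal and $\widetilde Q_k^A$ is orthonormal. Hence $\{R\tilde x_i\}_{i=1}^k$ is an orthonormal basis of ${\rm span}(V_k)$, and the complement ${\rm span}(R\tilde x_1,\dots,R\tilde x_l)^\perp$ within ${\rm span}(V_k)$ is spanned by $R\tilde x_{l+1},\dots,R\tilde x_k$. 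Likewise the complement for $L$ is spanned by $\tilde p_{l+1}^L,\dots,\tilde p_k^L$. For $A$, the complement in ${\rm span}(U_{k+1})$ is spanned by $\tilde p_{l+1}^A,\dots,\tilde p_k^A$ together with one further vector $U_{k+1}\tilde q_{k+1}^A$, where $\tilde q_{k+1}^A$ completes $\widetilde Q_k^A$ to an orthogonal matrix.

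Next I would compute the projected matrices. By orthonormality, $(\tilde p_i^A)^{\rm T} Q_A R\tilde x_j=(\tilde q_i^A)^{\rm T}U_{k+1}^{\rm T}Q_AV_k\tilde w_j$. Using $Q_AV_k=U_{k+1}B_k$ from \eqref{Lanczos bidiagonalization}, this equals $(\tilde q_i^A)^{\rm T}B_k\tilde w_j=\tilde c_j\delta_{ij}$. Similarly, $Q_L\widehat V_k=\widehat U_k\widehat B_k$ together with \eqref{relation of v and vhat} gives $Q_LV_k=\widehat U_k\bar B_k$, so $(\tilde p_i^L)^{\rm T}Q_LR\tilde x_j=\tilde s_j\delta_{ij}$. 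The extra vector satisfies $(\tilde q_{k+1}^A)^{\rm T}B_k=0$. Therefore the projection of $\{Q_A,Q_L\}$ onto the first block of subspaces is the diagonal pair $\{{\rm diag}(\tilde c_1,\dots,\tilde c_l),{\rm diag}(\tilde s_1,\dots,\tilde s_l)\}$, whose generalized singular values are the $l$ largest Ritz values. The projection onto the complements is $\{({\rm diag}(\tilde c_{l+1},\dots,\tilde c_k);0),{\rm diag}(\tilde s_{l+1},\dots,\tilde s_k)\}$, whose generalized singular values are $\tilde c_i/\tilde s_i$ for $i=l+1,\dots,k$. These are precisely the unwanted Ritz values, i.e., the exact shifts of \cite{IRJBD}. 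The off-diagonal blocks vanish, so the two problems decouple.

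The main obstacle is bookkeeping rather than substance. First, the $A$-side left space has dimension $k+1$, so the complement is $(k-l+1)$-dimensional, and I must check that the extra direction contributes only a zero row and so changes no generalized singular value. Second, the sign matrix $D_k$ relating $\widehat V_k$ to $V_k$ must be absorbed correctly into $\bar B_k$ when passing from $Q_L\widehat V_k=\widehat U_k\widehat B_k$ to $Q_LV_k=\widehat U_k\bar B_k$. The case of the $l$ smallest components follows from the same argument with the index sets $\{1,\dots,l\}$ and $\{l+1,\dots,k\}$ replaced by $\{k-l+1,\dots,k\}$ and $\{1,\dots,k-l\}$.
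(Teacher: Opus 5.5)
Your proposal is correct and is essentially the verification the paper intends: it gives no proof beyond calling the theorem ``straightforward to verify'', and your argument supplies that computation. Using $U_{k+1}^{\mathrm T}Q_AV_k=B_k$, $\widehat U_k^{\mathrm T}Q_LV_k=\bar B_k$ and the CSD \eqref{GSVD of Bk and Bkbar}, you show the projected pair is block diagonal in the Ritz bases, with the wanted Ritz values on one block and the exact shifts on the other, and you correctly treat the extra direction $U_{k+1}\tilde q_{k+1}^A$, which makes the $A$-side complement $(k+1-l)$-dimensional but contributes only a zero row.
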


Motivated by this theorem and the decompositions in \eqref{direct sum decompositions by Ritz vectors},
we are able to find new shifts for use within the IRRJBD
algorithm, such that they are more accurate approximations than the exact shifts to some of the undesired
generalized singular values of $\{A,L\}$. For RJBD, make the orthogonal direct sum
decompositions similar to those in \eqref{direct sum decompositions by Ritz vectors}:
\begin{equation}
\label{direct sum decompositions by refined Ritz vectors}
\begin{aligned}
\text{span}(U_{k+1})&=\text{span}(\hat p_1^A,\dots,\hat p_l^A)\oplus\text{span}(\hat p_1^A,\dots,\hat p_l^A)^{\perp},\\
\text{span}(\widehat U_k)&=\text{span}(\hat p_1^L,\dots,\hat p_l^L)\oplus\text{span}(\hat p_1^L,\dots,\hat p_l^L)^{\perp},\\
\text{span}(V_k)&=\text{span}(R\hat x_1,\dots,R\hat x_l)\oplus\text{span}(R\hat x_1,\dots,R\hat x_l)^{\perp}.
\end{aligned}
\end{equation}
Since $\text{span}(\hat p_1^A,\dots,\hat p_l^A)$, $\text{span}(\hat p_1^L,\dots,\hat p_l^L)$ and
$\text{span}(R\hat x_1,\dots,R\hat x_l)$ generally contain more accurate
approximations to the desired $l$ left
and right generalized singular vectors than $\text{span}(\tilde p_1^A,\dots,\tilde p_l^A)$,
$\text{span}(\tilde p_1^L,\dots,\tilde p_l^L)$ and $\text{span}(R\tilde x_1,\dots,R\tilde x_l)$ do, their
orthogonal complements $\text{span}(\hat p_1^A,\dots,\hat p_l^A)^{\perp}$, $\text{span}(\hat
p_1^L,\dots,\hat p_l^L)^{\perp}$ and $\text{span}(R\hat x_1,\dots,R\hat x_l)^{\perp}$ contain more
accurate approximations to
some $k-l$ ones among the undesired generalized singular vectors than $\text{span}(\tilde
p_1^A,\dots,\tilde
p_l^A)^{\perp}$, $\text{span}(\tilde p_1^L,\dots,\tilde p_l^L)^{\perp}$ and $\text{span}(R\tilde
x_1,\dots,R\tilde x_l)^{\perp}$ do, respectively. As a result, the generalized singular values of
$\{Q_A,Q_L\}$ with respect to the left subspaces $\text{span}(\hat p_1^A,\dots,\hat p_l^A)^{\perp}$ and
$\text{span}(\hat p_1^L,\dots,\hat p_l^L)^{\perp}$, and the right subspace $\text{span}(R\hat x_1,\dots,R\hat
x_l)^{\perp}$ are generally more accurate approximations to $k-l$ undesired generalized singular values than the exact shifts. Therefore, they are generally better than
the exact shifts and
the resulting restarted subspaces contain more accurate approximations to the desired left and
right generalized singular vectors.

Since these new shifts originate from the RJBD method, we call them the
refined shifts, and use them within the IRRJBD algorithm. Similarly, in the sense of (\ref{direct sum
decompositions by refined Ritz vectors}), the refined shifts are the best  possible approximations to some of
the undesired generalized singular values and thus the best shifts for use within IRRJBD.
Consequently, it is expected that IRRJBD uses fewer or at least no more restarts to
converge than IRJBD for the same $k, l$ and starting
vector $u_1$. Based on the above arguments, we present a formal and clear definition of the refined shifts.

\begin{definition}
\label{Def: refined shifts}
Let $\widehat P_l^A=(\hat p_1^A,\dots,\hat p_l^A)$, $\widehat P_l^L=(\hat p_1^L,\dots,\hat p_l^L)$ and
$\widehat X_l=(\hat x_1,\dots,\hat x_l)$, and $\widehat P_l^{A\perp}$, $\widehat P_l^{L\perp}$ and $R\widehat
X_l^{\perp}$ be orthonormal matrices whose columns form orthonormal bases
of ${\rm span}(\hat p_1^A,\dots,\hat p_l^A)^{\perp}$, ${\rm span}(\hat p_1^L,\dots,\hat p_l^L)^{\perp}$
and $\text{span}(R\hat x_1,\dots,R\hat x_l)^{\perp}$, respectively.
The refined shifts are the generalized singular
values of $\{(\widehat P_l^{A\perp})^{\mathrm T}Q_AR\widehat X_l^{\perp},(\widehat P_l^{L\perp})^{\mathrm
T}Q_LR\widehat X_l^{\perp}\}$.
\end{definition}

Now we show how to compute the refined shifts efficiently and accurately.
Notice from \eqref{definition of refined right singular vector} and
\eqref{definition of refined left singular vectors} that $\hat p_i^A=U_{k+1}\hat q_i^A$, $\hat p_i^L=\widehat U_k\hat q_i^L$, and $R\hat x_i=V_k\hat w_i$.
Write the matrices $\widehat Q_l^A=(\hat q_1^A,\dots,\hat q_l^A)\in \mathbb{R}^{(k+1)\times l}$,
$\widehat Q_l^L=(\hat q_1^L,\dots,\hat q_l^L)\in \mathbb{R}^{k\times l}$, and
$\widehat W_l=(\hat w_1,\dots,\hat w_l)\in \mathbb{R}^{k\times l}$. Compute their full QR factorizations
\begin{equation}
\label{full OR of low-dimensional refined Ritz vectors}
\widehat W_l=Q_{\widehat W_l}R_{\widehat W_l}, \quad\widehat Q_l^A=Q_{\widehat Q_l^A}R_{\widehat Q_l^A},\quad \widehat Q_l^L=Q_{\widehat Q_l^L}R_{\widehat Q_l^L},
\end{equation}
and partition
\begin{equation}
\label{partition Q-factors of full QR}
Q_{\widehat W_l}=(\text{orth}(\widehat W_l),\,\widehat W_l^{\perp}), \,Q_{\widehat Q_l^A}=(\text{orth}(\widehat Q_l^A),\,\widehat Q_l^{A\perp}), \,Q_{\widehat Q_l^L}=(\text{orth}(\widehat Q_l^L),\,\widehat Q_l^{L\perp}),
\end{equation}
where $\text{orth}(\cdot)$ denotes the Q-factor in the thin QR factorization of
a matrix. Then
\begin{equation}
\begin{aligned}
&\text{span}\left\{\widehat P_l^{A\perp}\right\}=\text{span}\left\{U_{k+1}\widehat Q_l^{A\perp}\right\}, \\
&\text{span}\left\{\widehat P_l^{L\perp}\right\}=\text{span}\left\{\widehat U_{k}\widehat Q_l^{L\perp}\right\}, \\
&\text{span}\left\{R\widehat X_l^{\perp}\right\}=\text{span}\left\{V_k\widehat W_l^{\perp}\right\}.
\end{aligned}
\end{equation}
From \eqref{Lanczos bidiagonalization},
we have $U_{k+1}^{\rm T}Q_AV_k=B_k$ and $\widehat{U}_k^{\rm T}Q_LV_k=\bar{B}_k$. Thus,
by \cref{Def: refined shifts}, the refined shifts are the generalized singular values
of the $(k-l)\times (k-l)$ matrix pair $\{(\widehat Q_l^{A\perp})^{\mathrm T}B_k\widehat W_l^{\perp},(\widehat
Q_l^{L\perp})^{\mathrm T}\bar B_k\widehat W_l^{\perp}\}$, and can be computed using only $O((k-l)^3)$
flops. We summarize the procedure as \cref{alg:shifts}.

\begin{algorithm}[H]
	\renewcommand{\algorithmicrequire}{\textbf{Input:}}
	\renewcommand{\algorithmicensure}{\textbf{Output:}}
	\caption{The computation of refined shifts}
	\label{alg:shifts}
	\begin{algorithmic}[1]
        \REQUIRE $B_k$, $\bar B_k$, $\{\hat w_i\}_{i=1}^l$, $\{\hat q_i^A\}_{i=1}^l$, and $\{\hat q_i^L\}_{i=1}^l$.
        \ENSURE $k-l$ refined shifts $\{\hat\lambda_i,\hat\mu_i\}_{i=1}^{k-l}$ satisfying $\hat\lambda_i^2+\hat\mu_i^2=1$.
        \STATE Form $\widehat W_l=(\hat w_1,\dots,\hat w_l)$, $\widehat Q_l^A=(\hat q_1^A,\dots,\hat q_l^A)$ and $\widehat Q_l^L=(\hat q_1^L,\dots,\hat q_l^L)$.
        \STATE Compute the full QR factorizations (\ref{full OR of low-dimensional refined Ritz vectors}) of $\widehat W_l$, $\widehat Q_l^A$ and $\widehat Q_l^L$ to get $Q_{\widehat W_l}$, $Q_{\widehat Q_l^A}$ and $Q_{\widehat Q_l^L}$.
        \STATE Partition $Q_{\widehat W_l}$, $Q_{\widehat Q_l^A}$ and $Q_{\widehat Q_l^L}$ as (\ref{partition Q-factors of full QR}) to obtain $\widehat W_l^{\perp}$, $\widehat Q_l^{A\perp}$ and $\widehat Q_l^{L\perp}$.
        \STATE Form $(\widehat Q_l^{A\perp})^{\mathrm T}B_k\widehat W_l^{\perp}$ and $(\widehat Q_l^{L\perp})^{\mathrm T}\bar B_k\widehat W_l^{\perp}$, and compute the generalized singular values $\{\hat\lambda_i,\hat\mu_i\}_{i=1}^{k-l}$ of $\{(\widehat Q_l^{A\perp})^{\mathrm T}B_k\widehat W_l^{\perp},\,(\widehat Q_l^{L\perp})^{\mathrm T}\bar B_k\widehat W_l^{\perp}\}$.
	\end{algorithmic}
\end{algorithm}

\section{Summary of IRRJBD}\label{sec:6}

We still use the ``+3" strategy in implicitly restarted algorithms   \cite{IRJBD,Jia-poly-refined,implicit-restarted-Arnoldi}; that is, the number of shifts is $k-(l+3)$ when
the $l$ eigenpairs, SVD or GSVD components are computed. For IRRJBD, the default parameter $adjust=3$
means that $k-(l+3)$ refined shifts are used for implicit restart. Algorithm \ref{alg:IRRJBD} describes a
complete IRRJBD algorithm, and Table \ref{tab:parameters} lists the parameters used by IRRJBD and their
default values.

\begin{algorithm}[ht]
	\renewcommand{\algorithmicrequire}{\textbf{Input:}}
	\renewcommand{\algorithmicensure}{\textbf{Output:}}
	\caption{The IRRJBD algorithm}
	\label{alg:IRRJBD}
	\begin{algorithmic}[1]
        \REQUIRE Matrices $A \in \mathbb{R}^{m \times n}$, $L \in \mathbb{R}^{p \times n}$, vector $u_1 \in \mathbb{R}^m$ with $\|u_1\|=1$, the number $l=|t|$ of the desired extreme GSVD components with $t>0$ or $t<0$ indicating that the largest or
        smallest GSVD components are required, $k_{\max}$, $adjust$, the maximum restarts $maxit$,
        and the stopping tolerance $tol$ in (\ref{stopping criterion for refined}).
        \ENSURE $l$ converged $\{\hat c_i,\,\hat s_i,\,\hat p_i^A,\,\hat p_i^L,\,\hat x_i\}_{i=1}^l$.
        \STATE Do the $l$-step JBD process of $\{A,\,L\}$.
        \WHILE{the $l$ refined Ritz approximations not converged or restarts $< maxit$ }
        \label{determining convergence in alg implementation}
            \IF{the basis size $<k_{\max}$}
                \STATE Do one step of the JBD process.
            \ELSE
                \STATE Compute the refined shifts according to \Cref{alg:shifts}.
                \STATE Implicitly restart the JBD process using the refined shifts.
            \ENDIF
        \ENDWHILE
        \STATE Compute the $l$ converged right refined Ritz vectors $\hat x_i = R^{-1} V_k \hat w_i$
        by solving the consistent linear systems \eqref{calculate x_i by lsqr},
        in which $\tilde w_i$ is
        replaced with $\hat w_i$.
        \STATE Compute the $l$ converged left refined Ritz vectors $\hat p_i^A=U_{k+1}\hat q_i^A$ and $\hat p_i^L=\widehat U_k\hat q_i^L$, where $q_i^A$ and $q_i^L$ are defined in
        \eqref{definition of refined left singular vectors}.
        \STATE Compute the $l$ refined Ritz values $\{\hat c_i, \,\hat s_i\}$ as
        \eqref{definition of new approximate generalized singular values}.
	\end{algorithmic}
\end{algorithm}

Here are some implementation details of the algorithm. By default, the convergence of
line \ref{determining convergence in alg implementation} in \Cref{alg:IRRJBD} is judged by
\eqref{stopping criterion for refined} with the stopping tolerance $tol=10^{-8}$.
We solve the LS problems in \eqref{least-squares}
and consistent linear systems in \eqref{calculate x_i by lsqr} by the {\sc Matlab} built-in function
{\sf lsqr} with the stopping tolerance $lsqrtol=tol/100=10^{-10}$, and
set the maximum iteration steps of {\sf lsqr} to $n$.
We perform full reorthogonalization on $V_k^{\prime},\,U_{k+1}$ and $\widehat
U_k$ to ensure that they are numerically orthonormal to the
working precision. The starting vector $u_1$ is randomly
generated by the standard normal distribution function {\sf randn} in {\sc Matlab}
and then normalized. If $tol$ is smaller than the default value, we set
$lsqrtol=\max\{10\epsilon_{\rm mach},tol/100\}$.

\begin{table}[ht]
\centering
\caption{Parameters of IRRJBD}
\begin{tabularx}{\textwidth}{>{\raggedright\arraybackslash}p{0.15\textwidth}%
                                    >{\raggedright\arraybackslash}p{0.25\textwidth}%
                                    >{\raggedright\arraybackslash}p{0.5\textwidth}}
\hline
Parameters & Default values & Description \\ \hline
$t$ & 5 & $|t|=l$ is the number of desired GSVD components,  where $t>0$ or $t<0$ means that the largest or smallest GSVD components are required. \\ \hline
$k_{\max}$ & $\max\{3l, 20\}$ & Maximum subspace dimension \\ \hline
$adjust$ & $3$ & Integer added to $l$ to speed up convergence \\ \hline
$tol$ & $10^{-8}$ & Stopping tolerance in \eqref{stopping criterion for refined} \\ \hline
$maxit$ & $10000$ & Maximum number of restarts \\ \hline
$lsqrtol$ & $\max\{10\epsilon_{\rm mach},\frac{tol}{100}\}$ & Stopping tolerance of {\sf lsqr} \\ \hline
$lsqrmaxit$ & $n$ & Maximum number of iterations of {\sf lsqr} \\ \hline
$u_1$ & generated by {\sf randn} and then normalized & The unit length starting vector \\ \hline
\end{tabularx}
\label{tab:parameters}
\end{table}

\section{Numerical experiments}\label{sec:7}

In this section, we present numerical experiments to demonstrate the performance of IRRJBD, and
show its superiority to IRJBD in \cite{IRJBD}.
The experiments
were performed on an Intel(R) Core(TM) Ultra 7 265K, 48 GB RAM, and 20 cores using the {\sc Matlab}
R2025b with
the machine precision $\epsilon_{\rm mach}=2.22\times10^{-16}$ under the Windows 11 64-bit system.

We use some matrices from the SuiteSparse Matrix Collection \cite{davis2011university} or their transpose as $A$, and use the following two well-conditioned matrices as $L$:
$$
L_{\text{tall}}=\left(\begin{array}{ccc}
2 & & \\
1 & \ddots &  \\
& \ddots & 2 \\
& & 1
\end{array}\right)\in \mathbb{R}^{(n+1)\times n},\quad
L_{\text{flat}}=\left(\begin{array}{cccc}
2 & 1 & & \\
& \ddots & \ddots & \\
& & 2 & 1
\end{array}\right)\in \mathbb{R}^{(n-1)\times n}.
$$
Table \ref{tab:matrices} lists the test matrices and some of their properties. The first two $A$ are square, the middle two are flat, and the last two are tall. Except $\{\text{lp\_ken\_18}^{\rm T}, L_{\text{tall}}\}$, the values of $\kappa(A)$'s are taken from the \href{https://sparse.tamu.edu/}{SuiteSparse Matrix Collection}. The condition number of lp\_ken\_18 is not reported in the collection.
Using {\sc Matlab} built-in function {\sf svds}, we found
that the computed smallest singular value of lp\_ken\_18$^{\rm T}$ is $7.32\times 10^{-16}$.
Thus, lp\_ken\_18$^{\rm T}$ has a numerically infinite condition number.

\begin{table}[htp!]
\centering
\caption{Test matrices, where $nnz$ is the total number of nonzero entries in $\{A,L\}$, and $\kappa([A;L])$ is the condition number of the stacked matrix
$(A^{\mathrm T},L^{\mathrm T})^{\mathrm T}$, whose modest sizes indicate that the matrix pairs $\{A,L\}$ are regular.
}
\resizebox{0.95\textwidth}{!}{
\begin{tabular}{|c|c|c|c|c|c|c|c|}
\hline
$A$ & L & $m$ & $n$ & $p$ & $nnz$ & $\kappa(A)$ & $\kappa([A;L])$ \\ \hline
nopoly & $L_{\text{tall}}$ & 10774 & 10774 & 10775 & 92390 & 1.21E+16 & 9.9871 \\ \hline
appu & $L_{\text{flat}}$ & 14000 & 14000 & 13999 & 1881102 & 1.71E+02 & 21.4439 \\ \hline
pcb3000 & $L_{\text{tall}}$ & 3960 & 7732 & 7733 & 72943 & 5.67E+02 & 91.4016 \\ \hline
seymourl & $L_{\text{flat}}$ & 4944 & 6316 & 6315 & 51123 & 3.04E+01 & 28.2866 \\ \hline
lp\_ken\_18$^{\rm T}$ & $L_{\text{tall}}$ & 154699 & 105127 & 105128 & 568425 & $+\infty$ & 15.6627 \\ \hline
lp\_dfl001$^{\rm T}$ & $L_{\text{flat}}$ & 12230 & 6071 & 6070 & 47772 & 3.94E+15 & 10.0163 \\ \hline
\end{tabular}
}
\label{tab:matrices}
\end{table}

We compare IRRJBD with IRJBD on the test problems in Table \ref{tab:matrices}.
To be fair, we took the same parameters $adjust=3$,
$tol=10^{-8}$, $lsqrtol=10^{-10}$, and starting vector $u_1$ in IRRJBD and IRJBD
for each test problem. We first investigate if they can
compute the desired GSVD components. Then we compare their efficiency
when they succeed.
Notice that $R$ is unknown in \eqref{relativeres}. In \cite{IRJBD}, the true relative residual norm in the
left-hand side of \eqref{relativeres} is replaced by the
relative residual norm
\begin{equation}\label{actualres}
  \frac{\|r\left(\hat c, \hat s, \hat x, \hat p^A, \hat p^L\right)\|}
  {\sqrt{\|A\|_1\|A\|_{\infty}+\|L\|_1\|L\|_{\infty}}},
\end{equation}
where $\|\cdot\|_1$ and $\|\cdot\|_{\infty}$ denote the 1-norm and $\infty$-norm of a matrix.

First, we present the results for $A$ flat square $A$ and $L$ tall in
Table~\ref{tab: robustness of IRJBD and IRRJBD, square of flat A, tall L}. From
Table~\ref{tab: robustness of IRJBD and IRRJBD, square of flat A, tall L}, we see that both IRJBD
and IRRJBD succeeded for the test problems with the relative residual norm
\eqref{actualres} of the $l$ approximate GSVD components
below the stopping tolerance $tol$ when
$\|\underline{B}_k^{-1}\|\|\widehat B_k^{-1}\|$ had generic sizes;
they failed to converge for large $\|\underline{B}_k^{-1}\|\|\widehat B_k^{-1}\|$.
Specifically, the algorithms computed the largest GSVD components successfully;
but they failed for the computation of the smallest GSVD components with the relative residual
norms larger than $tol$. These phenomena were explained in detail
in \cite{IRJBD} and confirms \cref{finiteprecision} by
noticing that nopoly is numerically rank deficient, in which case the smallest
Ritz value approximated the trivial zero generalized singular value but left Ritz and
refined Ritz vectors for $A$ did not converge.

Furthermore,
we observe from the table that IRRJBD was more efficient than IRJBD in terms of restarts and CPU
time. For $\{\text{nopoly},L_{\text{tall}}\}$, the speedup ratios are
$30\%\sim 50\%$ in half of the cases; for $\{\text{pcb3000},L_{\text{tall}}\}$, the two
algorithms were equally efficient. We also notice that,
for $\{A,L\}=\{\text{nopoly},L_{\text{tall}}\}$, although the two algorithms
did not reach the convergence for subspace dimension 25 after 10000 restarts were used
when computing the 5 and 10 smallest GSVD components,
the $Res$'s obtained by IRRJBD were squares of those by IRJBD, great reductions.

\begin{table}[htp!]
\caption{Effectiveness and efficiency of IRRJBD and IRJBD for square or flat $A$ and tall $L$, where $iter$ is
the number of restarts, $time$ denotes the CPU time, $Res$ is the maximum
relative residual norm
\eqref{actualres} of the
$l$ converged approximations to the desired GSVD components, and the quantities
$B_{\rm IR}$ and $B_{\rm IRR}$ are the values of $\|\underline{B}_k^{-1}\|\|\widehat B_k^{-1}\|$ when the
IRJBD and IRRJBD terminated,
$SI=(iter_{\text{IR}}-iter_{\text{IRR}})/iter_{\text{IR}}$ and
$ST=(time_{\text{IR}}-time_{\text{IRR}})/time_{\text{IR}}$ are the speedup ratios in restarts and CPU time,
respectively, and ``-" indicates that an underlying algorithm did not converge after 10000 restarts were used, in which we do not report the comparison results.  $Res$ in bold denotes
no convergence.
}
\label{tab: robustness of IRJBD and IRRJBD, square of flat A, tall L}
\begin{minipage}{\textwidth}
\centering
\begin{subtable}{\textwidth}
\centering
\caption{$\{A,L\}=\{\text{nopoly},L_{\text{tall}}\}$}
\resizebox{\textwidth}{!}{%
\begin{tabular}{|l|ll|ll|ll|ll|}
\hline
$target$ & \multicolumn{2}{c|}{-5} & \multicolumn{2}{c|}{5} & \multicolumn{2}{c|}{-10} & \multicolumn{2}{c|}{10} \\ \hline
$k_{\max}$ & \multicolumn{1}{l|}{25} & 50 & \multicolumn{1}{l|}{25} & 50 & \multicolumn{1}{l|}{25} & 50 & \multicolumn{1}{l|}{25} & 50 \\ \hline
$iter_{\text{IR}}$ & \multicolumn{1}{l|}{10000} & 7099 & \multicolumn{1}{l|}{535} & 785 & \multicolumn{1}{l|}{10000} & 3600 & \multicolumn{1}{l|}{1514} & 1399 \\ \hline
$iter_{\text{IRR}}$ & \multicolumn{1}{l|}{10000} & 4804 & \multicolumn{1}{l|}{431} & 731 & \multicolumn{1}{l|}{10000} & 2066 & \multicolumn{1}{l|}{1007} & 1322 \\ \hline
$SI$(\%) & \multicolumn{1}{l|}{-} & 32.33 & \multicolumn{1}{l|}{19.44} & 6.88 & \multicolumn{1}{l|}{-} & 42.61 & \multicolumn{1}{l|}{33.49} & 5.50 \\ \hline
$time_{\text{IR}}$ & \multicolumn{1}{l|}{-} & 142795.50 & \multicolumn{1}{l|}{3556.64} & 15342.95 & \multicolumn{1}{l|}{-} & 70585.92 & \multicolumn{1}{l|}{12124.34} & 27627.00 \\ \hline
$time_{\text{IRR}}$ & \multicolumn{1}{l|}{-} & 96299.73 & \multicolumn{1}{l|}{2980.70} & 14582.81 & \multicolumn{1}{l|}{-} & 42959.31 & \multicolumn{1}{l|}{8114.281} & 27202.81 \\ \hline
$ST$(\%) & \multicolumn{1}{l|}{-} & 32.56 & \multicolumn{1}{l|}{16.19} & 4.95 & \multicolumn{1}{l|}{-} & 39.14 & \multicolumn{1}{l|}{33.07} & 1.54 \\ \hline
$Res_{\text{IR}}$ & \multicolumn{1}{l|}{{\bf 1.52E-03}} & 2.28E-09 & \multicolumn{1}{l|}{9.52E-09} & 5.49E-09 & \multicolumn{1}{l|}{{\bf 1.92E-03}} & 2.13E-09 & \multicolumn{1}{l|}{8.31E-09} & 4.88E-09 \\ \hline
$Res_{\text{IRR}}$ & \multicolumn{1}{l|}{{\bf 3.34E-06}} & 8.61E-10 & \multicolumn{1}{l|}{7.78E-09} & 6.12E-09 & \multicolumn{1}{l|}{{\bf 3.90E-06}} & 8.23E-10 & \multicolumn{1}{l|}{6.45E-09} & 5.92E-09 \\ \hline
$B_{\text{IR}}$ & \multicolumn{1}{l|}{3.08E+10} & 6.70E+04 & \multicolumn{1}{l|}{1.04E+02} & 4.11E+02 & \multicolumn{1}{l|}{6.41E+10} & 2.76E+04 & \multicolumn{1}{l|}{1.61E+04} & 1.14E+03 \\ \hline
$B_{\text{IRR}}$ & \multicolumn{1}{l|}{1.57E+11} & 1.80E+05 & \multicolumn{1}{l|}{7.02E+02} & 6.96E+02 & \multicolumn{1}{l|}{1.45E+11} & 2.99E+04 & \multicolumn{1}{l|}{1.55E+02} & 6.52E+02 \\ \hline
\end{tabular}%
}
\end{subtable}
\vspace{0.8em} 
\begin{subtable}{\textwidth}
\centering
\caption{$\{A,L\}=\{\text{pcb3000},L_{\text{tall}}\}$}
\resizebox{\textwidth}{!}{%
\begin{tabular}{|lllllllll|}
\hline
\multicolumn{1}{|l|}{$target$} & \multicolumn{2}{|c|}{-5} & \multicolumn{2}{c|}{5} & \multicolumn{2}{c|}{-10} & \multicolumn{2}{c|}{10} \\ \hline
\multicolumn{1}{|l|}{$k_{\max}$} & \multicolumn{1}{|l|}{25} & \multicolumn{1}{l|}{50} & \multicolumn{1}{l|}{25} & \multicolumn{1}{l|}{50} & \multicolumn{1}{l|}{25} & \multicolumn{1}{l|}{50} & \multicolumn{1}{l|}{25} & 50 \\ \hline
\multicolumn{1}{|l|}{$iter_{\text{IR}}$} & \multicolumn{1}{|l|}{11} & \multicolumn{1}{l|}{4} & \multicolumn{1}{l|}{16} & \multicolumn{1}{l|}{6} & \multicolumn{1}{l|}{19} & \multicolumn{1}{l|}{6} & \multicolumn{1}{l|}{34} & 10 \\ \hline
\multicolumn{1}{|l|}{$iter_{\text{IRR}}$} & \multicolumn{1}{|l|}{11} & \multicolumn{1}{l|}{4} & \multicolumn{1}{l|}{15} & \multicolumn{1}{l|}{6} & \multicolumn{1}{l|}{18} & \multicolumn{1}{l|}{6} & \multicolumn{1}{l|}{35} & 10 \\ \hline
\multicolumn{1}{|l|}{$SI$(\%)} & \multicolumn{1}{|l|}{0.00} & \multicolumn{1}{l|}{0.00} & \multicolumn{1}{l|}{6.25} & \multicolumn{1}{l|}{0.00} & \multicolumn{1}{l|}{5.26} & \multicolumn{1}{l|}{0.00} & \multicolumn{1}{l|}{-2.94} & 0.00 \\ \hline
\multicolumn{1}{|l|}{$time_{\text{IR}}$} & \multicolumn{1}{|l|}{79.02} & \multicolumn{1}{l|}{77.05} & \multicolumn{1}{l|}{112.50} & \multicolumn{1}{l|}{108.16} & \multicolumn{1}{l|}{102.45} & \multicolumn{1}{l|}{103.30} & \multicolumn{1}{l|}{172.92} & 164.72 \\ \hline
\multicolumn{1}{|l|}{$time_{\text{IRR}}$} & \multicolumn{1}{|l|}{80.66} & \multicolumn{1}{l|}{74.53} & \multicolumn{1}{l|}{107.56} & \multicolumn{1}{l|}{109.92} & \multicolumn{1}{l|}{94.30} & \multicolumn{1}{l|}{101.77} & \multicolumn{1}{l|}{179.47} & 165.78 \\ \hline
\multicolumn{1}{|l|}{$ST$(\%)} & \multicolumn{1}{|l|}{-2.08} & \multicolumn{1}{l|}{3.27} & \multicolumn{1}{l|}{4.39} & \multicolumn{1}{l|}{-1.63} & \multicolumn{1}{l|}{7.96} & \multicolumn{1}{l|}{1.48} & \multicolumn{1}{l|}{-3.79} & -0.65 \\ \hline
\multicolumn{1}{|l|}{$Res_{\text{IR}}$} & \multicolumn{1}{|l|}{8.68E-11} & \multicolumn{1}{l|}{7.38E-11} & \multicolumn{1}{l|}{3.02E-09} & \multicolumn{1}{l|}{2.80E-09} & \multicolumn{1}{l|}{1.01E-10} & \multicolumn{1}{l|}{8.59E-11} & \multicolumn{1}{l|}{3.17E-09} & 3.11E-09 \\ \hline
\multicolumn{1}{|l|}{$Res_{\text{IRR}}$} & \multicolumn{1}{|l|}{9.42E-11} & \multicolumn{1}{l|}{7.61E-11} & \multicolumn{1}{l|}{2.79E-09} & \multicolumn{1}{l|}{3.13E-09} & \multicolumn{1}{l|}{1.03E-10} & \multicolumn{1}{l|}{9.28E-11} & \multicolumn{1}{l|}{2.94E-09} & 3.12E-09 \\ \hline
\multicolumn{1}{|l|}{$B_{\text{IR}}$} & \multicolumn{1}{|l|}{2.44E+02} & \multicolumn{1}{l|}{3.27E+02} & \multicolumn{1}{l|}{3.59E+02} & \multicolumn{1}{l|}{3.81E+02} & \multicolumn{1}{l|}{1.84E+02} & \multicolumn{1}{l|}{3.15E+02} & \multicolumn{1}{l|}{1.34E+02} & 3.81E+02 \\ \hline
\multicolumn{1}{|l|}{$B_{\text{IRR}}$} & \multicolumn{1}{|l|}{2.44E+02} & \multicolumn{1}{l|}{3.26E+02} & \multicolumn{1}{l|}{3.51E+02} & \multicolumn{1}{l|}{3.81E+02} & \multicolumn{1}{l|}{1.55E+02} & \multicolumn{1}{l|}{3.15E+02} & \multicolumn{1}{l|}{1.89E+02} & 3.77E+02 \\ \hline
\end{tabular}%
}
\end{subtable}
\end{minipage}
\end{table}

We now conduct a more detailed efficiency comparison of IRRJBD and IRJBD. In
Table~\ref{tab: efficiency of IRJBD and IRRJBD}, both algorithms converged for all the
test problems, i.e., the actual relative residual norms, defined by \eqref{actualres}, of the approximate GSVD components were smaller than
$tol$. Therefore, we do not report $Res$'s
in the table, and only present the numbers of restarts and the
total CPU time.

\begin{table}[htp!]
\caption{More efficiency comparisons of IRRJBD and IRJBD}
\label{tab: efficiency of IRJBD and IRRJBD}
\resizebox{\textwidth}{!}{%
\begin{tabular}{|l|l|l|ll|l|ll|l|}
\hline
\multirow{2}{*}{$(A;L)$} & \multicolumn{1}{c|}{\multirow{2}{*}{$target$}} & \multicolumn{1}{c|}{\multirow{2}{*}{$k_{\max}$}} & \multicolumn{2}{c|}{$iter$} & \multicolumn{1}{c|}{\multirow{2}{*}{$SI$(\%)}} & \multicolumn{2}{c|}{$time$} & \multirow{2}{*}{$ST$(\%)} \\ \cline{4-5} \cline{7-8}
 & \multicolumn{1}{c|}{} & \multicolumn{1}{c|}{} & \multicolumn{1}{l|}{IR} & IRR & \multicolumn{1}{c|}{} & \multicolumn{1}{l|}{IR} & IRR &  \\ \hline
\multirow{6}{*}{\begin{tabular}[c]{@{}l@{}}nopoly\\ \\ $L_{\text{tall}}$\end{tabular}} & -5 & 50 & \multicolumn{1}{l|}{7099} & 4804 & 32.33 & \multicolumn{1}{l|}{142795.50} & 96299.73 & 32.56 \\ \cline{2-9}
 & \multirow{2}{*}{5} & 25 & \multicolumn{1}{l|}{535} & 431 & 19.44 & \multicolumn{1}{l|}{3556.64} & 2980.70 & 16.19 \\ \cline{3-9}
 &  & 50 & \multicolumn{1}{l|}{785} & 731 & 6.88 & \multicolumn{1}{l|}{15342.95} & 14582.81 & 4.95 \\ \cline{2-9}
 & -10 & 50 & \multicolumn{1}{l|}{3600} & 2066 & 42.61 & \multicolumn{1}{l|}{70585.92} & 42959.31 & 39.14 \\ \cline{2-9}
 & \multirow{2}{*}{10} & 25 & \multicolumn{1}{l|}{1514} & 1007 & 33.49 & \multicolumn{1}{l|}{12124.34} & 8114.281 & 33.07 \\ \cline{3-9}
 &  & 50 & \multicolumn{1}{l|}{1399} & 1322 & 5.50 & \multicolumn{1}{l|}{27627.00} & 27202.81 & 1.54 \\ \hline
\multirow{4}{*}{\begin{tabular}[c]{@{}l@{}}lp\_ken\_18$^{\rm T}$\\ \\ $L_{\text{tall}}$\end{tabular}} & \multirow{2}{*}{5} & 25 & \multicolumn{1}{l|}{1540} & 1526 & 0.91 & \multicolumn{1}{l|}{69137.03} & 66817.27 & 3.36 \\ \cline{3-9}
 &  & 50 & \multicolumn{1}{l|}{257} & 254 & 1.17 & \multicolumn{1}{l|}{36905.97} & 36963.28 & -0.16 \\ \cline{2-9}
 & \multirow{2}{*}{10} & 25 & \multicolumn{1}{l|}{225} & 211 & 6.22 & \multicolumn{1}{l|}{9201.80} & 8529.42 & 7.31 \\ \cline{3-9}
 &  & 50 & \multicolumn{1}{l|}{291} & 290 & 0.34 & \multicolumn{1}{l|}{38768.55} & 38806.09 & -0.10 \\ \hline
\multirow{4}{*}{\begin{tabular}[c]{@{}l@{}}appu\\ \\ $L_{\text{flat}}$\end{tabular}} & \multirow{2}{*}{-5} & 25 & \multicolumn{1}{l|}{3648} & 2459 & 32.59 & \multicolumn{1}{l|}{119809.60} & 80730.16 & 32.62 \\ \cline{3-9}
 &  & 50 & \multicolumn{1}{l|}{618} & 523 & 15.37 & \multicolumn{1}{l|}{43332.45} & 36430.81 & 15.93 \\ \cline{2-9}
 & \multirow{2}{*}{-10} & 25 & \multicolumn{1}{l|}{8523} & 6238 & 26.81 & \multicolumn{1}{l|}{316122.20} & 231805.20 & 26.67 \\ \cline{3-9}
 &  & 50 & \multicolumn{1}{l|}{947} & 650 & 31.36 & \multicolumn{1}{l|}{68717.23} & 48056.67 & 30.07 \\ \hline
\multirow{4}{*}{\begin{tabular}[c]{@{}l@{}}seymourl\\ \\ $L_{\text{flat}}$\end{tabular}} & \multirow{2}{*}{-5} & 25 & \multicolumn{1}{l|}{204} & 106 & 48.04 & \multicolumn{1}{l|}{817.53} & 418.02 & 48.87 \\ \cline{3-9}
 &  & 50 & \multicolumn{1}{l|}{45} & 39 & 13.33 & \multicolumn{1}{l|}{479.72} & 420.11 & 12.43 \\ \cline{2-9}
 & \multirow{2}{*}{-10} & 25 & \multicolumn{1}{l|}{499} & 306 & 38.68 & \multicolumn{1}{l|}{1425.69} & 870.55 & 38.94 \\ \cline{3-9}
 &  & 50 & \multicolumn{1}{l|}{70} & 63 & 10.00 & \multicolumn{1}{l|}{671.69} & 612.64 & 8.79 \\ \hline
\multirow{4}{*}{\begin{tabular}[c]{@{}l@{}}lp\_dfl001$^{\rm T}$\\ \\ $L_{\text{flat}}$\end{tabular}} & \multirow{2}{*}{-5} & 25 & \multicolumn{1}{l|}{34} & 34 & 0.00 & \multicolumn{1}{l|}{130.56} & 128.95 & 1.23 \\ \cline{3-9}
 &  & 50 & \multicolumn{1}{l|}{12} & 12 & 0.00 & \multicolumn{1}{l|}{130.23} & 132.88 & -2.03 \\ \cline{2-9}
 & \multirow{2}{*}{-10} & 25 & \multicolumn{1}{l|}{40} & 40 & 0.00 & \multicolumn{1}{l|}{114.58} & 113.16 & 1.24 \\ \cline{3-9}
 &  & 50 & \multicolumn{1}{l|}{12} & 12 & 0.00 & \multicolumn{1}{l|}{117.38} & 117.58 & -0.17 \\ \hline
\end{tabular}%
}
\end{table}

As shown in \Cref{tab: efficiency of IRJBD and IRRJBD},
IRRJBD used (considerably) fewer restarts and less CPU time than IRJBD, and the speedup ratios
were $30\%\sim 50\%$ in two-thirds of the cases.
Only for the last problem in the table, the two algorithms
consumed the same restarts and very comparable CPU time. Therefore, as a whole,
we can conclude that IRRJBD is
generally more efficient and is often much more efficient than IRJBD.

Next, we examine the ultimately attainable accuracy of IRJBD and IRRJBD with
$lsqrtol=10\epsilon_{\rm mach}$.
Table~\ref{tab: final accuracy of IRJBD and IRRJBD} reports the results.

\begin{table}[htbp!]
\caption{Final accuracy of IRJBD and IRRJBD.}
\label{tab: final accuracy of IRJBD and IRRJBD}
\resizebox{\textwidth}{!}{%
\begin{tabular}{|l|l|l|ll|ll|ll|}
\hline
\multirow{2}{*}{$(A;L)$} & \multicolumn{1}{c|}{\multirow{2}{*}{$target$}} & \multicolumn{1}{c|}{\multirow{2}{*}{$k_{\max}$}} & \multicolumn{2}{c|}{$iter$} & \multicolumn{2}{c|}{$time$} & \multicolumn{2}{c|}{$Res$} \\ \cline{4-9}
 & \multicolumn{1}{c|}{} & \multicolumn{1}{c|}{} & \multicolumn{1}{l|}{IR} & IRR & \multicolumn{1}{l|}{IR} & IRR & \multicolumn{1}{l|}{IR} & IRR \\ \hline
\multirow{4}{*}{\begin{tabular}[c]{@{}l@{}}lp\_ken\_18$^{\rm T}$\\ \\ $L_{\text{tall}}$\end{tabular}} & \multirow{2}{*}{5} & 25 & \multicolumn{1}{l|}{2763} & 2675 & \multicolumn{1}{l|}{204840.17} & 162964.53 & \multicolumn{1}{l|}{2.54E-13} & 2.49E-13 \\ \cline{3-9}
 &  & 50 & \multicolumn{1}{l|}{506} & 486 & \multicolumn{1}{l|}{79503.48} & 79460.89 & \multicolumn{1}{l|}{8.56E-13} & 8.48E-13 \\ \cline{2-9}
 & \multirow{2}{*}{10} & 25 & \multicolumn{1}{l|}{293} & 276 & \multicolumn{1}{l|}{12759.09} & 11681.25 & \multicolumn{1}{l|}{5.26E-13} & 4.40E-13 \\ \cline{3-9}
 &  & 50 & \multicolumn{1}{l|}{376} & 364 & \multicolumn{1}{l|}{51068.14} & 51990.38 & \multicolumn{1}{l|}{2.19E-12} & 2.12E-12 \\ \hline
\multirow{4}{*}{\begin{tabular}[c]{@{}l@{}}appu\\ \\ $L_{\text{flat}}$\end{tabular}} & \multirow{2}{*}{-5} & 25 & \multicolumn{1}{l|}{6592} & 4748 & \multicolumn{1}{l|}{282070.83} & 203291.63 & \multicolumn{1}{l|}{4.71E-13} & 3.48E-13 \\ \cline{3-9}
 &  & 50 & \multicolumn{1}{l|}{1107} & 860 & \multicolumn{1}{l|}{107983.42} & 87503.20 & \multicolumn{1}{l|}{1.91E-13} & 1.53E-13 \\ \cline{2-9}
 & \multirow{2}{*}{-10} & 25 & \multicolumn{1}{l|}{10000} & 8525 & \multicolumn{1}{l|}{447939.72} & 405786.81 & \multicolumn{1}{l|}{1.20E-10} & 4.52E-13 \\ \cline{3-9}
 &  & 50 & \multicolumn{1}{l|}{1709} & 1467 & \multicolumn{1}{l|}{168235.16} & 141258.48 & \multicolumn{1}{l|}{2.68E-13} & 2.26E-13 \\ \hline
\multirow{4}{*}{\begin{tabular}[c]{@{}l@{}}seymourl\\ \\ $L_{\text{flat}}$\end{tabular}} & \multirow{2}{*}{-5} & 25 & \multicolumn{1}{l|}{334} & 190 & \multicolumn{1}{l|}{2903.95} & 1653.94 & \multicolumn{1}{l|}{1.25E-14} & 9.08E-15 \\ \cline{3-9}
 &  & 50 & \multicolumn{1}{l|}{69} & 59 & \multicolumn{1}{l|}{1338.11} & 1138.39 & \multicolumn{1}{l|}{7.72E-15} & 6.73E-15 \\ \cline{2-9}
 & \multirow{2}{*}{-10} & 25 & \multicolumn{1}{l|}{802} & 560 & \multicolumn{1}{l|}{7017.11} & 4993.51 & \multicolumn{1}{l|}{2.27E-14} & 1.56E-14 \\ \cline{3-9}
 &  & 50 & \multicolumn{1}{l|}{105} & 95 & \multicolumn{1}{l|}{1905.55} & 1712.77 & \multicolumn{1}{l|}{9.07E-15} & 8.15E-15 \\ \hline
\end{tabular}%
}
\end{table}

Table~\ref{tab: final accuracy of IRJBD and IRRJBD} shows that,
except the computation of ten smallest GSVD components of
$\{{\rm appu}, L_{\rm tall}\}$ using
IRJBD with subspace dimension $k_{\max}=25$ where the algorithm failed after 10000 restarts,
the actual residual norms obtained by IRJBD and IRRJBD achieved the level of $\epsilon_{\rm mach}$,
which confirms \cref{lsaccuracy} by noticing that the sizes of
$\kappa([A;L])$'s in Table~\ref{tab:matrices} are modest.
As we see, IRRJBD required significantly fewer restarts and less CPU time
than IRJBD in at least half of the cases,
which are similar to the results in Table \ref{tab: efficiency of IRJBD and IRRJBD}
where $tol=10^{-8}$.

\begin{table}[htp!]
\caption{Effectiveness and efficiency of IRRJBD and IRJBD for flat $L$. $Res$ in bold denotes
no convergence.}
\label{tab: robustness of IRJBD and IRRJBD, tall A or flat L}
\begin{minipage}{\textwidth}
\centering
\begin{subtable}{\textwidth}
\centering
\caption{$\{A,L\}=\{\text{appu},L_{\text{flat}}\}$}
\resizebox{\textwidth}{!}{%
\begin{tabular}{|l|ll|ll|ll|ll|}
\hline
$target$ & \multicolumn{2}{c|}{-5} & \multicolumn{2}{c|}{5} & \multicolumn{2}{c|}{-10} & \multicolumn{2}{c|}{10} \\ \hline
$k_{\max}$ & \multicolumn{1}{l|}{25} & 50 & \multicolumn{1}{l|}{25} & 50 & \multicolumn{1}{l|}{25} & 50 & \multicolumn{1}{l|}{25} & 50 \\ \hline
$iter_{\text{IR}}$ & \multicolumn{1}{l|}{3648} & 618 & \multicolumn{1}{l|}{62} & 18 & \multicolumn{1}{l|}{8523} & 947 & \multicolumn{1}{l|}{541} & 73 \\ \hline
$iter_{\text{IRR}}$ & \multicolumn{1}{l|}{2459} & 523 & \multicolumn{1}{l|}{60} & 24 & \multicolumn{1}{l|}{6238} & 650 & \multicolumn{1}{l|}{746} & 74 \\ \hline
$SI$(\%) & \multicolumn{1}{l|}{32.59} & 15.37 & \multicolumn{1}{l|}{-} & - & \multicolumn{1}{l|}{26.81} & 31.36 & \multicolumn{1}{l|}{-} & - \\ \hline
$time_{\text{IR}}$ & \multicolumn{1}{l|}{119809.60} & 43332.45 & \multicolumn{1}{l|}{1982.52} & 1228.11 & \multicolumn{1}{l|}{316122.20} & 68717.23 & \multicolumn{1}{l|}{20126.77} & 5298.77 \\ \hline
$time_{\text{IRR}}$ & \multicolumn{1}{l|}{80730.16} & 36430.81 & \multicolumn{1}{l|}{1988.05} & 1630.11 & \multicolumn{1}{l|}{231805.20} & 48056.67 & \multicolumn{1}{l|}{28108.64} & 5390.69 \\ \hline
$ST$(\%) & \multicolumn{1}{l|}{32.62} & 15.93 & \multicolumn{1}{l|}{-} & - & \multicolumn{1}{l|}{26.67} & 30.07 & \multicolumn{1}{l|}{-} & - \\ \hline
$Res_{\text{IR}}$ & \multicolumn{1}{l|}{1.67E-09} & 3.41E-09 & \multicolumn{1}{l|}{{\bf 4.09E-02}} & {\bf 4.09E-02} & \multicolumn{1}{l|}{1.30E-09} & 1.54E-09 & \multicolumn{1}{l|}{{\bf 4.12E-02}} &{\bf 4.09E-02} \\ \hline
$Res_{\text{IRR}}$ & \multicolumn{1}{l|}{6.38E-10} & 3.12E-09 & \multicolumn{1}{l|}{\bf 4.09E-02} & {\bf 4.09E-02} & \multicolumn{1}{l|}{6.41E-10} & 1.48E-09 & \multicolumn{1}{l|}{{\bf 4.12E-02}} & {\bf 4.09E-02} \\ \hline
$B_{\text{IR}}$ & \multicolumn{1}{l|}{6.36E+01} & 5.41E+01 & \multicolumn{1}{l|}{5.64E+09} & 6.91E+09 & \multicolumn{1}{l|}{6.82E+01} & 9.92E+02 & \multicolumn{1}{l|}{5.28E+09} & 5.91E+09 \\ \hline
$B_{\text{IRR}}$ & \multicolumn{1}{l|}{7.48E+02} & 1.45E+02 & \multicolumn{1}{l|}{3.54E+09} & 5.68E+09 & \multicolumn{1}{l|}{5.09E+02} & 1.80E+04 & \multicolumn{1}{l|}{6.20E+09} & 6.30E+09 \\ \hline
\end{tabular}%
}
\end{subtable}
\vspace{0.8em} 
\begin{subtable}{\textwidth}
\centering
\caption{$\{A,L\}=\{\text{seymourl},L_{\text{flat}}\}$}
\resizebox{\textwidth}{!}{%
\begin{tabular}{|l|ll|ll|ll|ll|}
\hline
$target$ & \multicolumn{2}{c|}{-5} & \multicolumn{2}{c|}{5} & \multicolumn{2}{c|}{-10} & \multicolumn{2}{c|}{10} \\ \hline
$k_{\max}$ & \multicolumn{1}{l|}{25} & 50 & \multicolumn{1}{l|}{25} & 50 & \multicolumn{1}{l|}{25} & 50 & \multicolumn{1}{l|}{25} & 50 \\ \hline
$iter_{\text{IR}}$ & \multicolumn{1}{l|}{204} & 45 & \multicolumn{1}{l|}{24} & 8 & \multicolumn{1}{l|}{499} & 70 & \multicolumn{1}{l|}{57} & 17 \\ \hline
$iter_{\text{IRR}}$ & \multicolumn{1}{l|}{106} & 39 & \multicolumn{1}{l|}{20} & 90 & \multicolumn{1}{l|}{306} & 63 & \multicolumn{1}{l|}{69} & 73 \\ \hline
$SI$(\%) & \multicolumn{1}{l|}{48.04} & 13.33 & \multicolumn{1}{l|}{--} & -- & \multicolumn{1}{l|}{38.68} & 10.00 & \multicolumn{1}{l|}{--} & -- \\ \hline
$time_{\text{IR}}$ & \multicolumn{1}{l|}{817.53} & 479.72 & \multicolumn{1}{l|}{96.42} & 86.89 & \multicolumn{1}{l|}{1425.69} & 671.69 & \multicolumn{1}{l|}{165.00} & 166.69 \\ \hline
$time_{\text{IRR}}$ & \multicolumn{1}{l|}{418.02} & 420.11 & \multicolumn{1}{l|}{81.88} & 961.11 & \multicolumn{1}{l|}{870.55} & 612.64 & \multicolumn{1}{l|}{197.61} & 706.34 \\ \hline
$ST$(\%) & \multicolumn{1}{l|}{48.87} & 12.43 & \multicolumn{1}{l|}{--} & -- & \multicolumn{1}{l|}{38.94} & 8.79 & \multicolumn{1}{l|}{--} & -- \\ \hline
$Res_{\text{IR}}$ & \multicolumn{1}{l|}{4.90E-10} & 3.22E-10 & \multicolumn{1}{l|}{\bf 2.55E-02} & {\bf 2.51E-02} & \multicolumn{1}{l|}{5.51E-10} & 3.80E-10 & \multicolumn{1}{l|}{{\bf 2.98E-02}} &{\bf 2.51E-02} \\ \hline
$Res_{\text{IRR}}$ & \multicolumn{1}{l|}{4.09E-10} & 3.50E-10 & \multicolumn{1}{l|}{\bf 2.65E-02} & {\bf 2.51E-02} & \multicolumn{1}{l|}{4.43E-10} & 4.33E-10 & \multicolumn{1}{l|}{\bf 2.80E-02} & {\bf 2.49E-02} \\ \hline
$B_{\text{IR}}$ & \multicolumn{1}{l|}{6.50E+01} & 2.24E+03 & \multicolumn{1}{l|}{3.07E+08} & 3.44E+08 & \multicolumn{1}{l|}{6.17E+01} & 4.94E+01 & \multicolumn{1}{l|}{1.46E+08} & 3.72E+08 \\ \hline
$B_{\text{IRR}}$ & \multicolumn{1}{l|}{2.68E+01} & 4.46E+01 & \multicolumn{1}{l|}{3.07E+08} & 3.78E+08 & \multicolumn{1}{l|}{4.30E+01} & 4.39E+01 & \multicolumn{1}{l|}{1.47E+08} & 3.82E+08 \\ \hline
\end{tabular}%
}
\end{subtable}
\vspace{0.8em} 
\begin{subtable}{\textwidth}
\centering
\caption{$\{A,L\}=\{\text{lp\_dfl001}^{\rm T},L_{\text{flat}}\}$}
\resizebox{\textwidth}{!}{%
\begin{tabular}{|l|ll|ll|ll|ll|}
\hline
$target$ & \multicolumn{2}{c|}{-5} & \multicolumn{2}{c|}{5} & \multicolumn{2}{c|}{-10} & \multicolumn{2}{c|}{10} \\ \hline
$k_{\max}$ & \multicolumn{1}{l|}{25} & 50 & \multicolumn{1}{l|}{25} & 50 & \multicolumn{1}{l|}{25} & 50 & \multicolumn{1}{l|}{25} & 50 \\ \hline
$iter_{\text{IR}}$ & \multicolumn{1}{l|}{34} & 12 & \multicolumn{1}{l|}{14} & 5 & \multicolumn{1}{l|}{40} & 12 & \multicolumn{1}{l|}{37} & 9 \\ \hline
$iter_{\text{IRR}}$ & \multicolumn{1}{l|}{34} & 12 & \multicolumn{1}{l|}{13} & 13 & \multicolumn{1}{l|}{40} & 12 & \multicolumn{1}{l|}{50} & 13 \\ \hline
$SI$(\%) & \multicolumn{1}{l|}{0.00} & 0.00 & \multicolumn{1}{l|}{--} & -- & \multicolumn{1}{l|}{0.00} & 0.00 & \multicolumn{1}{l|}{--} & -- \\ \hline
$time_{\text{IR}}$ & \multicolumn{1}{l|}{130.56} & 130.23 & \multicolumn{1}{l|}{53.78} & 54.23 & \multicolumn{1}{l|}{114.58} & 117.38 & \multicolumn{1}{l|}{97.39} & 90.92 \\ \hline
$time_{\text{IRR}}$ & \multicolumn{1}{l|}{128.95} & 132.88 & \multicolumn{1}{l|}{48.16} & 146.14 & \multicolumn{1}{l|}{113.16} & 117.58 & \multicolumn{1}{l|}{134.17} & 129.23 \\ \hline
$ST$(\%) & \multicolumn{1}{l|}{1.23} & -2.03 & \multicolumn{1}{l|}{--} & -- & \multicolumn{1}{l|}{1.24} & -0.17 & \multicolumn{1}{l|}{--} & -- \\ \hline
$Res_{\text{IR}}$ & \multicolumn{1}{l|}{3.20E-08} & 3.21E-08 & \multicolumn{1}{l|}{\bf 2.87E-02} & {\bf 2.41E-02} & \multicolumn{1}{l|}{3.20E-08} & 3.21E-08 & \multicolumn{1}{l|}{\bf 3.88E-02} & {\bf 2.44E-02} \\ \hline
$Res_{\text{IRR}}$ & \multicolumn{1}{l|}{3.19E-08} & 3.21E-08 & \multicolumn{1}{l|}{\bf 2.91E-02} & {\bf 2.43E-02} & \multicolumn{1}{l|}{3.21E-08} & 3.21E-08 & \multicolumn{1}{l|}{\bf 3.88E-02} & {\bf 2.42E-02} \\ \hline
$B_{\text{IR}}$ & \multicolumn{1}{l|}{1.03E+04} & 4.45E+04 & \multicolumn{1}{l|}{2.56E+09} & 8.12E+09 & \multicolumn{1}{l|}{5.63E+03} & 3.35E+05 & \multicolumn{1}{l|}{8.25E+09} & 6.82E+09 \\ \hline
$B_{\text{IRR}}$ & \multicolumn{1}{l|}{1.44E+03} & 3.44E+04 & \multicolumn{1}{l|}{2.47E+09} & 7.85E+09 & \multicolumn{1}{l|}{1.53E+04} & 5.59E+04 & \multicolumn{1}{l|}{1.59E+09} & 6.66E+09 \\ \hline
\end{tabular}%
}
\end{subtable}
\end{minipage}
\end{table}

Next, we present the results for some tall $A$ or flat $L$. As has been addressed,
the JBD and RJBD methods may encounter serious difficulties for these problems because
 $\|\underline{B}_k^{-1}\|\|\widehat B_k^{-1}\|$ could be arbitrarily large.
The results are reported in Table~\ref{tab: robustness of IRJBD and IRRJBD, tall A or flat L},
where the four parts correspond to the
tall $A$ and tall $L$, square $A$ and flat $L$, flat $A$ and flat $L$, and tall $A$ and flat $L$,
respectively. For
the examples that both algorithms failed to converge, we do not compare the efficiency of the two
algorithms.

As we can see from Table~\ref{tab: robustness of IRJBD and IRRJBD, tall A or flat L}, for tall matrices
$A$, both algorithms computed the smallest GSVD components but failed to compute the largest GSVD components because $\|\underline{B}_k^{-1}\|\|\widehat B_k^{-1}\|$'s
are very large so that the $Res$ were
bigger than $tol$ by several orders. A rigorous explanation is given in \cite{IRJBD} on these
subtle differences of IRJBD, and it is directly applicable to IRRJBD, as is restated below.

From \eqref{start vectors after restart}, for the computation of the largest GSVD components,
it is known that the updated starting vector $u_1^+$ tends to a linear combination
of the eigenvectors of $Q_AQ_A^{\rm T}$ corresponding to the desired
largest eigenvalues as restarts proceed, and thus
ultimately contains {\em little} information on the eigenvectors of $Q_AQ_A^{\rm T}$
associated with its zero eigenvalues.
Since $U_k^{\rm T}Q_AQ_A^{\rm T}U_k=\underline{B}_k^{\rm T}\underline{B}_k$, it is more likely that
the smallest Ritz value, i.e., the smallest
eigenvalue of $\underline{B}_k^{\rm T}\underline{B}_k$
is not small, i.e., $\|\underline{B}_k^{-1}\|$ is not large.  On the contrary,
for the smallest GSVD components, the updated starting vector
$u_1^+$ tends to a linear combination
of the eigenvectors of $Q_AQ_A^{\rm T}$ corresponding to the zero and desired
smallest eigenvalues as restarts proceed, and thus
contains {\em non-negligible} information on the eigenvectors of $Q_AQ_A^{\rm T}$
associated with its zero eigenvalues, causing that the smallest
eigenvalue of $\underline{B}_k^{\rm T}\underline{B}_k$ can be arbitrarily small,
i.e., $\|\underline{B}_k^{-1}\|$ is large.

From Tables~\ref{tab: robustness of IRJBD and IRRJBD, square of flat A, tall L}--\ref{tab: robustness of IRJBD and IRRJBD, tall A or flat L}, we can see
that, for many of the cases, IRRJBD used substantially
fewer restarts and CPU time than IRJBD did.

Finally, we select two examples from Table \ref{tab: final accuracy of IRJBD and IRRJBD} and plot the curves of the residual norms versus the restarts for the two algorithms, as shown in Figure~\ref{fig: IRJBD and IRRJBD, nopoly,Ltall,10,25,seymourl,Lflat,-10,25}. Specifically, for each point in the figure, the value on the vertical axis denotes
the maximum residual norm among the $l$ approximate GSVD components.

\begin{figure}[htp!]
\centering
\includegraphics[width=0.9\linewidth]{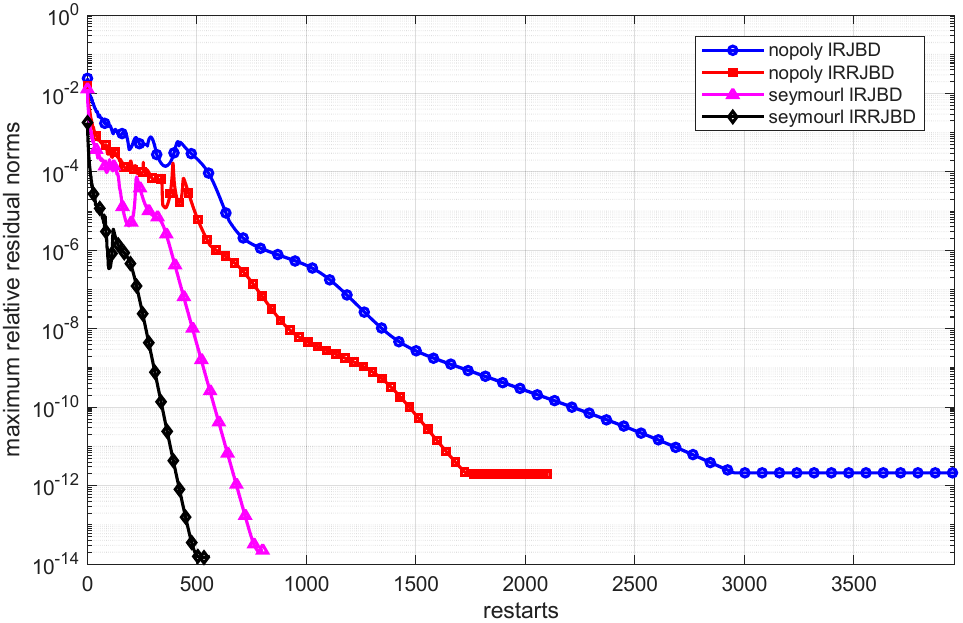}
\caption{Convergence curves of IRJBD and IRRJBD for computing the largest ten GSVD components of $\{\text{nopoly},L_{\text{tall}}\}$ and the smallest ten GSVD components of $\{\text{seymourl},L_{\text{flat}}\}$ with $k_{\max}=25$.}
\label{fig: IRJBD and IRRJBD, nopoly,Ltall,10,25,seymourl,Lflat,-10,25}
\end{figure}

We observe from the figure that the convergence of
all scenarios exhibited a little irregular behavior in the initial stage but soon it became smooth.
Furthermore, for the two problems, when the two algorithms stabilized,
IRRJBD was approximately
one and half times and nearly twice as fast as IRJBD in terms of restarts,
respectively, substantial improvements!


In summary, based on the whole experiments, we conclude that IRRJBD and IRJBD are equally
reliable, but IRRJBD is generally significantly more efficient than IRJBD.

\section{Conclusion}\label{sec:8}

We have proposed the RJBD method, which, unlike the JBD method, computes
the refined Ritz approximations that satisfy the residual optimality.
The RJBD method is suitable for computing several extreme GSVD components of a large regular matrix pair.
We have analyzed the convergence of the JBD and RJBD methods, and proved that RJBD has better convergence
than JBD and overcomes the possibly irregular convergence or non-convergence
of Ritz vectors. These results hold for general Rayleigh--Ritz projection method and refined
Rayleigh--Ritz projection method for the GSVD problem. In the meantime,
we have established the interlacing property of Ritz values and generalized singular values
as well as a general interlacing property of the matrix pair and its submatrices consisting
of its arbitrary columns,
which, to our best knowledge, are new.
We have adapted the implicit restarting technique to RJBD, and have developed an IRRJBD algorithm
with the refined shifts proposed, which are shown to be
more accurate than the exact shifts used within IRJBD. The
numerical experiments have illustrated
that IRRJBD algorithm often outperforms IRJBD significantly.

There remains an important problem to be solved for the JBD and RJBD methods. As we have
addressed in \cite{IRJBD} and this paper, in finite precision arithmetic,
JBD and RJBD may encounter severe difficulties when there are zero or infinite
generalized singular values because
$\|\underline{B}_k^{-1}\|\|\widehat B_k^{-1}\|$ can be very large. How to propose effective and efficient purifications and introduce them into the methods to ensure that
$\|\underline{B}_k^{-1}\|\|\widehat B_k^{-1}\|$ is uniformly bounded
is extremely significant and highly challenging. These will constitute our forthcoming
work. We should remind that purifications are needed to
avoid misconvergence, spurious approximations and
convergence delay in numerous projection
methods for not only generalized eigenvalue problems and GSVD problems but also
the SVD problem of a rectangular or flat matrix where the singular triplets with the
singular values closest to the target
point {\em zero} are of interest, i.e., the smallest singular triplets, which is the case in
JD-type SVD methods \cite{Hochstenbach2004BIT,HuangandJia2019JD}.

\section*{Declarations}
The two authors declare that they have no
financial interests, and they read and approved the final manuscript.
\smallskip

{\bf Data Availability.} \ Not applicable.



\end{document}